\DeclareMathAlphabet{\mathpzc}{OT1}{pzc}{m}{it}
\newtheorem{theorem}{Theorem}[section]
\newtheorem{definition}[theorem]{Definition}
\newtheorem{proposition}[theorem]{Proposition}
\newtheorem{remark}[theorem]{Remark}
\newtheorem{noname}[theorem]{}
\newtheorem*{acknowledgement}{Acknowledgements}
\newtheorem{lemma-conjecture}[theorem]{Lemma--Conjecture}
\newtheorem{corollary}[theorem]{Corollary}
\numberwithin{equation}{theorem}
\renewcommand{\mathcal}{\mathscr}
\newcommand{\SE}{{\mathcal{E}}}
\newcommand{\SI}{{\mathcal{I}}}
\newcommand{\SN}{{\mathcal{N}}}
\newcommand{\SO}{{\mathcal{O}}}
\newcommand{\ST}{{\mathcal{T}}}
\newcommand{\SY}{{\mathcal{Y}}}
\renewcommand{\mathbb}{\mathbf}
\newcommand{\PP}{\mathbb{P}}
\newcommand{\kk}{\mathbb{k}}
\newcommand{\WY}{\widetilde{Y}}
\title[An infinitesimal condition to deform a finite morphism to an embedding]{An infinitesimal condition to deform a finite morphism to an embedding}
\author{Francisco Javier Gallego}
\address{Departamento de \'Algebra, Universidad Complutense de Madrid}
\email{gallego@mat.ucm.es}
\author{Miguel Gonz\'alez}
\address{Departamento de \'Algebra, Universidad Complutense de Madrid}
\email{mgonza@mat.ucm.es}
\author{\\Bangere P. Purnaprajna}
\address{Department of Mathematics, University of Kansas}
\email{purna@math.ku.edu}
\subjclass[2000]{14B10, 14D15, 13D10, 14D06, 14A15, 14J10, 14J29}
\keywords{deformation of morphisms, embedding, smoothing, rope, canonical surface}
\thanks{The first and the second author were partially supported by grants MTM2006--04785 and MTM2009--06964 and by the UCM research group 910772. The first author also thanks the Department of Mathematics of the University of Kansas for its hospitality. The third author thanks the General Research Fund (GRF) of the University of Kansas for partially supporting this research. He also thanks the Algebra Department of the Universidad Complutense de Madrid for its hospitality.}
\begin{document}

\maketitle
\begin{abstract}
In this article we give a sufficient condition for a morphism $\varphi$ from a smooth variety $X$ to projective space, finite onto a smooth image, to be deformed to an embedding. This result puts some theorems on deformation of morphisms of curves and surfaces such as $K3$ and general type, obtained by ad hoc methods,  in a new, more conceptual light. One of the main interests of our result is to apply it to the construction of smooth varieties in projective space with given invariants. We illustrate this by using our result to construct canonically embedded surfaces with $c_1^2=3p_g-7$ and derive some interesting properties of their moduli spaces. Another interesting application of our result is the smoothing of ropes. We obtain a sufficient condition for a rope embedded in projective space to be smoothable. As a consequence, we prove that canonically embedded carpets satisfying certain conditions can be smoothed. We also give simple, unified proofs of known theorems on the smoothing of $1$--dimensional  ropes and $K3$ carpets. Our condition for deforming $\varphi$ to an embedding can be stated very transparently in terms of the cohomology class of a suitable first order infinitesimal deformation of $\varphi$. It holds in a very general setting (any $X$ of arbitrary dimension and any $\varphi$ unobstructed with an algebraic formally semiuniversal deformation). The simplicity of the result can be seen for instance when we specialize it to the case of curves.
\end{abstract}

\section*{Introduction}
The main purpose of this paper is to give a sufficient condition (see Theorem~\ref{cor.morphisms}) for a morphism $\varphi$ from a smooth variety $X$ to projective space, finite onto a smooth image, to be deformed to an embedding. This condition works in a very general setting, as it applies to any $X$ of arbitrary dimension and any $\varphi$ unobstructed having an algebraic formally semiuniversal deformation. The nature of the condition is infinitesimal. To the best of our knowledge, a general result of this nature, that is, an infinitesimal condition on $\varphi$ capable of assuring that $\varphi$ deforms not just to a morphism birational onto its image (compare with~\cite[Theorem 1.4]{canonical}) but to a closed immersion, was not previously known. 
Somehow surprisingly, looking at the cohomology class in $H^0(\mathcal N_\varphi)$ of a suitable first order infinitesimal deformation $\widetilde \varphi$ of $\varphi$ gives us substantial information on whether $\varphi$ can be deformed to an embedding. More precisely, put in a geometric language, the condition that we require $\widetilde \varphi$ to satisfy is for im$\widetilde \varphi$ to contain certain embedded \emph{rope} (see Definition~\ref{defrope}) sharing invariants with $X$.
One of the virtues of our result is its simplicity. This can be seen when we specialize it (see  Theorem~\ref{curves}), to the case of curves. In this regard, it is worth comparing the arguments of Section~\ref{intrinsic.section} with the proofs of \cite[Theorem 1.1]{GGP} or \cite[Theorem 3.2]{GGP2}. In these theorems the authors proved, by ad--hoc, more cumbersome methods, results regarding the deformation of finite morphisms to embeddings,  respectively in the case of curves and of $K3$ surfaces. In contrast, the arguments  used in Proposition~\ref{intrinsic} and Theorem~\ref{cor.morphisms} are more conceptual, elegant and transparent. Moreover,  Proposition~\ref{intrinsic} and Theorem~\ref{cor.morphisms} integrate results like \cite[Theorem 1.1]{GGP} and the results discussed in Section~\ref{canonical.surfaces.section} (see also~\cite[4.5]{AK}) into a more general, deeper understanding of the problem of deforming morphisms to embeddings.

\medskip

\noindent
In the remaining of the article we give several applications of the results  proved in Section~\ref{intrinsic.section}. In Section~\ref{ropes.section} we apply Proposition~\ref{intrinsic} and Theorem~\ref{cor.morphisms} to the problem of smoothing ropes. As mentioned in the previous paragraph, ropes appear naturally in the infinitesimal condition requested by Proposition~\ref{intrinsic} and Theorem~\ref{cor.morphisms}. This and the fact that Proposition~\ref{intrinsic} and Theorem~\ref{cor.morphisms} ``produce'' embeddings instead of just degree $1$ morphisms, allow us to use these two results to smooth ropes. In Theorem~\ref{cor.ropes} we find a necessary condition to smooth ropes of arbitrary dimension. As a corollary we obtain a result (see Proposition~\ref{canonical.carpets.smoothing}) on the smoothing of \emph{canonical carpets} (a canonical carpet is a rope of multiplicity $2$ that has the same invariants as a canonically embedded surface of general type). In Section~\ref{ropes.section}  we also unify, using the deformation of morphism approach set in Section~\ref{intrinsic.section}, Proposition~\ref{canonical.carpets.smoothing} and several previous results on smoothing of ropes. In this regard, we apply Theorem~\ref{cor.ropes} to ropes on curves and we get (see Theorem~\ref{curves.2}) a very simple, more conceptual proof of~\cite[Theorem 2.4]{GGP}. We also apply Proposition~\ref{intrinsic} to give a different proof of ~\cite[Theorem 3.5]{GP} on the smoothing of $K3$ \emph{carpets} (ropes of multiplicity $2$ with the invariants of a smooth $K3$ surface) on rational normal scrolls. We revisit as well~\cite[Theorem 3.5]{GGP2} on the smoothing of $K3$ carpets on Enriques surfaces.

\medskip
\noindent Another, very interesting use of Theorem~\ref{cor.morphisms} is the construction of smooth varieties with given invariants. The idea is to choose smooth finite covers $X$ of smooth, embedded, simpler varieties $Y$ so that $X$ have the desired invariants and then apply Theorem~\ref{cor.morphisms}. This process is illustrated in Section~\ref{canonical.surfaces.section}, where this method is used to construct canonically embedded surfaces with invariants on the Castelnuovo line $c_1^2=3p_g-7$, for any odd $p_g \geq 7$ (see Proposition~\ref{construct.canonical.surfaces}). The systematic construction of canonically embedded surfaces with given invariants is far from easy. The \linebreak
Castelnuovo line plays a particularly important role in the geography of surfaces of general type because surfaces with invariants  below the Castelnuovo line cannot be canonically embedded. The canonically embedded surfaces of Section~\ref{canonical.surfaces.section} appear as one deforms,  using Theorem~\ref{cor.morphisms}, canonical double covers of certain non--minimal rational surfaces. The construction was previously done in a  nicely geometric fashion by Ashikaga and Konno (see~\cite[4.5]{AK}).  Section~\ref{canonical.surfaces.section} puts Ashikaga and Konno's construction in the more general and conceptual light shed by Theorem~\ref{cor.morphisms}.

\medskip
\noindent We devote the last part of Section~\ref{canonical.surfaces.section} to study some properties of the moduli of the surfaces $X$ of Proposition~\ref{construct.canonical.surfaces}. The computations in the proof of Proposition~\ref{construct.canonical.surfaces} yield the dimension of the irreducible component $\mathcal M'$ of $[X]$ in its moduli space  (see Proposition~\ref{moduli} and Remark~\ref{twocomponents}). We prove as well that $\mathcal M'$ contains a codimension $1$ irreducible stratum through $[X]$, parameterizing surfaces of general type whose canonical map is a degree $2$ morphism. The existence of this ``hyperelliptic'' stratum of codimension $1$ has the following counterpart in terms of double structures: there is a unique canonically embedded carpet supported on $\varphi(X)$.  Finally we remark this interesting phenomenon: even though we are considering surfaces on the Castelnuovo line, for some values $(p_g, 0, c_1^2)$ their moduli space has not only components like $\mathcal M'$, whose general point corresponds to a surface which can be canonically embedded, but also components whose general point corresponds to a surface whose canonical map is a degree $2$ morphism (see Remark~\ref{twocomponents}). This bears evidence to the complexity of the moduli of surfaces of general type as compared for instance with the moduli of curves or the moduli of polarized $K3$ surfaces. In the latter two moduli spaces, except when the genus is $2$, all their components are alike in the sense that their general point corresponds to a non--hyperelliptic curve or $K3$ surface.

\section{A sufficient condition to deform finite morphisms to embeddings}\label{intrinsic.section}

The main purpose of this section is to give sufficient conditions (Proposition~\ref{intrinsic} and  Theorem~\ref{cor.morphisms}) for a first order deformation of a morphism from a smooth projective variety to projective space, finite onto a smooth image, to be extended to a family of embeddings. Before stating and proving these results we need to set up the notations and conventions used in this section and in the remaining of the article:

\medskip
\begin{noname}\label{setup}
{\bf  Notation and conventions.} {\rm Throughout this article we will use the following notation and conventions:
\begin{enumerate}
\item  We will work over an algebraically closed field $\mathbf k$ of characteristic $0$.
\item  $X$ and $Y$ will denote smooth, irreducible projective varieties.
\item  $i$ will denote a projective embedding $i: Y \hookrightarrow \mathbf P^N$. In this case, $\mathcal I$  will denote the ideal sheaf of $i(Y)$ in $\mathbf P^N$. Likewise, we will often abridge $i^{\ast}\mathcal O_{\mathbb P^N}(1)$ as $\mathcal O_Y(1)$.
\item $\pi$ will denote a finite morphism $\pi: X \longrightarrow Y$ of degree $n \geq 2$; in this case, $\mathcal E$ will denote  the trace--zero module of $\pi$ ($\mathcal E$ is a vector bundle on $Y$ of rank $n-1$).
\item $\varphi$ will denote a projective morphism $\varphi: X \longrightarrow \mathbf P^N$ such that $\varphi= i \circ \pi$.
\end{enumerate}}
\end{noname}

\noindent Recall that the normal sheaf of $\varphi$ is defined from the exact sequence
\begin{equation}\label{normal.sheaf}
 0 \longrightarrow \ST_X \longrightarrow \varphi^{\ast}\ST_{\PP^N} \longrightarrow \SN_{\varphi} \longrightarrow 0,
\end{equation}
injective at the left--hand--side by generic smoothness.
Recall also (see~\cite[Definition 3.4.6 and Lemma 3.4.7 (iii)]{Ser}) that $H^0(\mathcal N_\varphi)$ parametrizes first order infinitesimal deformations of $\varphi$.
In order to state and prove Proposition~\ref{intrinsic} and Theorem~\ref{cor.morphisms} we need to introduce a homomorphism defined in~\cite[Proposition 3.7]{Gon}:

\begin{proposition}\label{morphism.miguel} Let $\varphi$ be as in Notation~\ref{setup}. Then
there exists a homomorphism
\begin{equation*}
 H^0(\mathcal N_\varphi) \overset{\Psi}\longrightarrow \mathrm{Hom}(\pi^*(\mathcal I/\mathcal I^2), \mathcal O_X),
\end{equation*}
that appears when taking cohomology on the commutative diagram~\cite[(3.3.2)]{Gon}. Since
\begin{equation*}
\mathrm{Hom}(\pi^*(\mathcal I/\mathcal I^2), \mathcal O_X)=\mathrm{Hom}(\mathcal I/\mathcal I^2, \pi_*\mathcal O_X)=\mathrm{Hom}(\mathcal I/\mathcal I^2, \mathcal O_Y) \oplus \mathrm{Hom}(\mathcal I/\mathcal I^2, \mathcal E),
\end{equation*}
the homomorphism $\Psi$ has two components
\begin{eqnarray*}
H^0(\mathcal N_\varphi) & \overset{\Psi_1}  \longrightarrow & \mathrm{Hom}(\mathcal I/\mathcal I^2, \mathcal O_Y) \cr
H^0(\mathcal N_\varphi) & \overset{\Psi_2}\longrightarrow & \mathrm{Hom}(\mathcal I/\mathcal I^2, \mathcal E).
\end{eqnarray*}
\end{proposition}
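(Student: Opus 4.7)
The plan is to produce $\Psi$ by comparing the normal sheaf sequence of $\varphi$ with the pullback under $\pi$ of the normal sheaf sequence of $i$, and then to use adjunction together with the trace-zero decomposition to split the target.

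First I would set up the relevant commutative diagram. Since $Y$ is smooth and $i$ is a closed embedding into $\mathbf P^N$, the conormal sheaf $\mathcal I/\mathcal I^2$ is locally free on $Y$, so the normal bundle $\mathcal N_i=\mathcal Hom(\mathcal I/\mathcal I^2,\mathcal O_Y)$ fits in the short exact sequence $0\to\mathcal T_Y\to i^{\ast}\mathcal T_{\mathbf P^N}\to\mathcal N_i\to 0$, and pulling back by the flat finite morphism $\pi$ (or just by the fact that the sequence is locally split) still yields a short exact sequence $0\to\pi^{\ast}\mathcal T_Y\to\varphi^{\ast}\mathcal T_{\mathbf P^N}\to\pi^{\ast}\mathcal N_i\to 0$. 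The differential $d\pi\colon\mathcal T_X\to\pi^{\ast}\mathcal T_Y$ fits into a commutative ladder with the defining sequence (\ref{normal.sheaf}) of $\mathcal N_\varphi$, because both left-hand maps factor through $\varphi^{\ast}\mathcal T_{\mathbf P^N}$ via the chain rule $d\varphi=di\circ d\pi$. The snake lemma (or direct inspection) produces a natural sheaf map $\mathcal N_\varphi\to\pi^{\ast}\mathcal N_i$; this is the cohomological shadow of the diagram (3.3.2) of \cite{Gon} referenced in the statement.

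Next I would take global sections to obtain $H^0(\mathcal N_\varphi)\to H^0(\pi^{\ast}\mathcal N_i)$. Because $\mathcal I/\mathcal I^2$ is locally free, one has the identifications $\pi^{\ast}\mathcal N_i=\pi^{\ast}\mathcal Hom(\mathcal I/\mathcal I^2,\mathcal O_Y)=\mathcal Hom(\pi^{\ast}(\mathcal I/\mathcal I^2),\mathcal O_X)$, and taking $H^0$ yields $\mathrm{Hom}(\pi^{\ast}(\mathcal I/\mathcal I^2),\mathcal O_X)$. This is the homomorphism $\Psi$.

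For the decomposition, I would invoke the $(\pi^{\ast},\pi_{\ast})$-adjunction, valid in the form $\mathrm{Hom}_{\mathcal O_X}(\pi^{\ast}\mathcal F,\mathcal O_X)=\mathrm{Hom}_{\mathcal O_Y}(\mathcal F,\pi_{\ast}\mathcal O_X)$ for any coherent $\mathcal O_Y$-module $\mathcal F$ (a consequence of $\pi$ being finite, hence affine, so that $\pi_{\ast}$ is the global-over-$Y$ version of taking sections). Applied with $\mathcal F=\mathcal I/\mathcal I^2$ this gives $\mathrm{Hom}(\pi^{\ast}(\mathcal I/\mathcal I^2),\mathcal O_X)=\mathrm{Hom}(\mathcal I/\mathcal I^2,\pi_{\ast}\mathcal O_X)$. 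Because we work in characteristic $0$ and $\pi$ is finite of degree $n\geq 2$, the trace map gives a canonical splitting $\pi_{\ast}\mathcal O_X=\mathcal O_Y\oplus\mathcal E$, which induces $\mathrm{Hom}(\mathcal I/\mathcal I^2,\pi_{\ast}\mathcal O_X)=\mathrm{Hom}(\mathcal I/\mathcal I^2,\mathcal O_Y)\oplus\mathrm{Hom}(\mathcal I/\mathcal I^2,\mathcal E)$. Composing $\Psi$ with the two projections defines $\Psi_1$ and $\Psi_2$.

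The expected main obstacle is largely bookkeeping: making sure the middle column in the $3\times 3$ diagram really is the identity on $\varphi^{\ast}\mathcal T_{\mathbf P^N}$ (so that the induced vertical map at the right exists and agrees, after identifying $\pi^{\ast}\mathcal N_i$ with $\mathcal Hom(\pi^{\ast}(\mathcal I/\mathcal I^2),\mathcal O_X)$, with the one produced by diagram (3.3.2) of \cite{Gon}), and checking that the identifications we use are compatible with the canonical trace-zero splitting so that $\Psi_1$ and $\Psi_2$ are well-defined and functorial. Since every ingredient — the normal sheaf, the conormal sheaf, the $(\pi^{\ast},\pi_{\ast})$-adjunction, and the trace-zero decomposition — is standard and behaves well because $Y$ and $X$ are smooth and $\pi$ is finite, no substantive difficulty is expected beyond these naturality verifications.
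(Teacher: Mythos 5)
Your construction is correct and is essentially the paper's own argument: the paper simply cites the diagram (3.3.2) of \cite{Gon}, which is exactly the ladder comparing the normal-sheaf sequence of $\varphi$ with the pullback by $\pi$ of the normal-sheaf sequence of $i$, followed by taking global sections. The splitting via the $(\pi^{\ast},\pi_{\ast})$-adjunction and the trace decomposition $\pi_{\ast}\mathcal O_X=\mathcal O_Y\oplus\mathcal E$ is likewise exactly what the statement itself records, so no further comment is needed.
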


\noindent We also need to recall the definition of rope:

\begin{definition}\label{defrope}
Let $\Upsilon$ be a reduced connected scheme and let $\mathcal F$ be a locally free sheaf of rank $m-1$ on $Y$.
A rope of multiplicity $m$ or an $m$-rope, for short, on $\Upsilon$ with conormal bundle $\mathcal F$ is a scheme $\tilde \Upsilon$ with 
\begin{enumerate}
\item
$\SI_{\Upsilon, \tilde \Upsilon}^2=0$ and
\item
$\SI_{\Upsilon, \tilde \Upsilon} \simeq \mathcal F$ as $\SO_{Y}$--modules.
\end{enumerate}
\end{definition}

\begin{proposition}\label{intrinsic}
Let $T$ be a smooth irreducible algebraic curve and let $0$ be a closed point of $T$.
Let $\Phi: \mathcal X \longrightarrow \mathbf P^N_T$ be a flat family of morphisms over $T$ (i.e., $\Phi$ is a $T$--morphism for which $\mathcal X \longrightarrow T$ is proper, flat and surjective) such that
\begin{enumerate}
\item $\mathcal X$ is irreducible and reduced;
\item $\mathcal X_t$ is smooth, irreducible and projective for all $t \in T$;
\item $\mathcal X_0=X$ and $\Phi_0=\varphi$.
\end{enumerate}
Let $\Delta$ be the first infinitesimal neighborhood of $0$ in $T$. Let $\widetilde X=\mathcal X_\Delta$, $\widetilde \varphi = \Phi_\Delta$ and let $\nu$ be the element of $H^0(\mathcal N_\varphi)$ corresponding to $\widetilde \varphi$.
If $\, \Psi_2(\nu) \in \ \mathrm{Hom}(\mathcal I/\mathcal I^2, \mathcal E)$ is a surjective homomorphism, then, shrinking $T$ if necessary, $\Phi_t$ is a closed immersion for any $t \in T, t \neq 0$.
\end{proposition}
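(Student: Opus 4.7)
The strategy is to use the surjectivity of $\Psi_2(\nu)$ to embed an $n$-rope on $Y$ in $\PP^N$ into the central fiber of the scheme-theoretic image of $\Phi$; since such a rope shares its Hilbert polynomial with $X$, a comparison of Hilbert polynomials along the family then forces the generic $\Phi_t$ to be a closed immersion.

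To begin, I would record the geometric translation of the hypothesis and set up the family. By the standard correspondence between embedded ropes $\widetilde Y \subset \PP^N$ supported on $Y$ with conormal a locally free sheaf $\SF$, and surjections $\SI/\SI^2 \twoheadrightarrow \SF$, the map $\Psi_2(\nu)\colon \SI/\SI^2 \twoheadrightarrow \SE$ determines an embedded $n$-rope $\widetilde Y \subset \PP^N$ with conormal $\SE$, whose structure sheaf fits in the extension $0 \to \SE \to \SO_{\widetilde Y} \to \SO_Y \to 0$. Since $\pi_\ast \SO_X \cong \SO_Y \oplus \SE$ as $\SO_Y$-modules, $\widetilde Y$ and $X$ have the same Hilbert polynomial with respect to $\SO_{\PP^N}(1)$. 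Next, after shrinking $T$ about $0$, I would take $\mathcal Z \subset \PP^N_T$ to be the scheme-theoretic image of $\Phi$; since $\varphi = \Phi_0$ is finite and finiteness is open in the family direction, I may assume $\Phi$ is finite and $\mathcal Z \to T$ flat, so that each $\Phi_t$ factors as a finite surjection $\SX_t \twoheadrightarrow \mathcal Z_t \hookrightarrow \PP^N$. Writing $d$ for the degree of $\Phi_t$ onto $\mathcal Z_t$ for generic $t$, the identity $\chi(\SO_{\SX_t}(m)) = \chi(\Phi_{t,\ast}\SO_{\SX_t}(m))$ shows that $P_X$ and $d \cdot P_{\mathcal Z_t}$ have the same leading coefficient, while $P_{\mathcal Z_t} = P_{\mathcal Z_0}$ by flatness of $\mathcal Z/T$.

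The crux of the proof is the scheme-theoretic containment $\widetilde Y \subseteq \mathcal Z_0$ in $\PP^N$. Restricting $\Phi$ to $\Delta$ gives $\wphi\colon \widetilde X \to \PP^N_\Delta$; letting $\epsilon$ generate the maximal ideal of $\Delta$, flatness of $\widetilde X/\Delta$ provides a canonical isomorphism $\epsilon\SO_{\widetilde X} \cong \SO_X$. The composition $\wphi^\ast \SI \to \SO_{\widetilde X}$ lands in $\epsilon\SO_{\widetilde X}$ (because $\varphi(X) \subset Y$ kills $\SI$ on $X$) and annihilates $\SI^2$ (because $\epsilon^2=0$), yielding a map $\pi^\ast(\SI/\SI^2) \to \SO_X$. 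Unwinding the construction of $\Psi$ in the commutative diagram \cite[(3.3.2)]{Gon} identifies the corresponding homomorphism $\SI/\SI^2 \to \pi_\ast \SO_X = \SO_Y \oplus \SE$ as $(\Psi_1(\nu), \Psi_2(\nu))$. Surjectivity of the $\SE$-component then means exactly that the ideal $\SI_{\widetilde Y} \subset \SI$ of the rope is killed on $\wphi_\ast\SO_{\widetilde X}$ modulo $\epsilon$, i.e., that $\widetilde Y$ is contained in the scheme-theoretic image of $\wphi$ intersected with the central $\PP^N \subset \PP^N_\Delta$, which is precisely $\mathcal Z_0$.

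Combining the preceding steps yields $P_{\mathcal Z_t} = P_{\mathcal Z_0} \geq P_{\widetilde Y} = P_X$ for $m \gg 0$, which together with $P_X$ and $d\cdot P_{\mathcal Z_t}$ having equal leading coefficients forces $d=1$. Inserting this back into the short exact sequences $0 \to \SO_{\mathcal Z_t} \to \Phi_{t,\ast}\SO_{\SX_t} \to \mathcal Q \to 0$ and $0 \to \mathcal J \to \SO_{\mathcal Z_0} \to \SO_{\widetilde Y} \to 0$ shows $\chi(\mathcal Q(m)) = \chi(\mathcal J(m)) = 0$ for $m\gg 0$, and Serre vanishing forces $\mathcal Q = \mathcal J = 0$; hence $\Phi_t\colon \SX_t \to \mathcal Z_t$ is an isomorphism and $\Phi_t$ is a closed immersion into $\PP^N$, as desired. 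The main obstacle is the third paragraph, i.e., the translation of the algebraic surjectivity of $\Psi_2(\nu)$ into the geometric containment $\widetilde Y \subseteq \mathcal Z_0$, which requires a careful local unwinding of the construction of $\Psi$ in \cite{Gon}.
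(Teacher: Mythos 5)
Your proposal is correct and follows essentially the same route as the paper: the surjection $\Psi_2(\nu)$ produces an embedded rope $\widetilde Y$ with the same Hilbert polynomial as $X$, this rope is placed inside the central fibre of the flat image family $\Phi(\mathcal X)$, and the resulting chain of Euler--characteristic (in)equalities, via flatness of $\mathcal X$ and of the image, forces $\SO_{\mathcal Z_t}\simeq \Phi_{t\ast}\SO_{\mathcal X_t}$ and hence that $\Phi_t$ is a closed immersion for general $t$. The only real difference is that the containment $\widetilde Y\subseteq \mathcal Z_0$, which you propose to establish by unwinding the construction of $\Psi$ in \cite[(3.3.2)]{Gon} (your sketch of this is sound, though only the containment, not the equality with $\mathcal Z_0$, holds and is needed), is exactly what the paper imports from \cite[Theorem 3.8.(1)]{Gon} together with \cite[Proposition 2.1]{Gon}; also your detour through the generic degree $d$ is redundant, since once $P_{\widetilde Y}=P_X$ and the two flatness identities are in place, $\chi(\mathcal Q(m))+\chi(\mathcal J(m))=0$ for $m\gg 0$ already kills both sheaves (and, as in the paper, one should note that the general fibre $\mathcal Z_t$ is integral so that $\SO_{\mathcal Z_t}\to\Phi_{t\ast}\SO_{\mathcal X_t}$ is injective).
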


\begin{proof}
Recall (see~\cite[Proposition 2.1.(1)]{Gon}) that $\Psi_2(\nu)$ corresponds to a pair $(\widetilde Y, \tilde i)$ where $\widetilde Y$ is a rope on $Y$ with conormal bundle $\mathcal E$ (therefore, in particular, $\widetilde Y$ is a rope of multiplicity $n$ on $Y$) and $\tilde i: \widetilde Y \longrightarrow \mathbf P^N$ extends $i$. Moreover, since $\Psi_2(\nu)$ is a surjective homomorphism, $\tilde i$ is an embedding (see~\cite[Proposition 2.1.(2)]{Gon}). Now ~\cite[Theorem 3.8.(1)]{Gon} implies that $(\mathrm{im}\,\widetilde \varphi)_0=\tilde i(\widetilde Y)=\widetilde Y$.
Let $L=\varphi^*\mathcal O_{\mathbf P^N}(1)$ and $\mathcal L=\Phi^*\mathcal O_{\mathbf P^N_T}(1)$. Since $\mathcal E$ is both the conormal bundle of $\widetilde Y$ and the trace--zero module of $\pi$,
\begin{equation}\label{Xi1}
\chi(X,L^{\otimes l})=\chi(\widetilde Y,\mathcal O_{\widetilde Y}(l)),
\end{equation}
for all $l \geq 0$.
Let $\mathcal Y=\Phi(\mathcal X)$ and define $\Pi: \mathcal X \longrightarrow \mathcal Y$ so that $\Phi$ factors through $\Pi$.
Since $(\mathrm{im}\,\widetilde \varphi)_0=\widetilde Y$ and $(\mathrm{im}\,\widetilde \varphi)_0 \subset \mathcal Y_0$, we have an exact sequence
\begin{equation}\label{proj}
0 \longrightarrow \mathcal K \longrightarrow \mathcal O_{\mathcal Y_0} \longrightarrow \mathcal O_{\widetilde Y} \longrightarrow 0,
\end{equation}
which implies that
\begin{equation}\label{Xi2}
\chi(\mathcal Y_0,\mathcal O_{\mathcal Y_0}(l)) \geq \chi(\widetilde Y,\mathcal O_{\widetilde Y}(l)),
\end{equation}
for $l \gg 0$.
On the other hand, $\mathcal Y$ is a family, flat over $T$, because $\mathcal X$ is irreducible and reduced and so is $\mathcal Y=\Phi(\mathcal X)$. 
Thus the Hilbert polynomials of the fibers of $\mathcal Y$ are all equal,
i.e.,
\begin{equation}\label{Xi3}
\chi(\mathcal Y_0,\mathcal O_{\mathcal Y_0}(l)) = \chi(\mathcal Y_t,\mathcal O_{\mathcal Y_t}(l))
\end{equation}
for $l \geq 0$.
Now, since $\varphi$ is finite, after shrinking $T$ if necessary, so is $\Phi$.
Moreover, since $\mathcal Y$ is irreducible and reduced, its general fibers over $T$ are also irreducible and reduced.
Thus, for general $t \in T$, the induced map $\mathcal X_t \longrightarrow \mathcal Y_t$ is a finite morphism between integral varieties of the same dimension so it is surjective, i.e. $\Phi_t(\mathcal X_t)=\mathcal Y_t$.
Therefore, there is an inclusion
\begin{equation}\label{incl}
0 \longrightarrow \mathcal O_{\mathcal Y_t} \longrightarrow {\Pi_t}_*\mathcal O_{\mathcal X_t} \longrightarrow \mathcal Q \longrightarrow 0,
\end{equation}
and, by the projection formula we have
\begin{equation}\label{Xi4}
\chi(\mathcal Y_t,\mathcal O_{\mathcal Y_t}(l)) \leq  \chi(\mathcal X_t,\mathcal L_t^{\otimes l})
\end{equation}
for $l \gg 0$.
Finally, since $\mathcal X$ is flat over $T$  and $\mathcal L$ is ample we also have
\begin{equation}\label{Xi5}
\chi(X,L^{\otimes l}) = \chi(\mathcal X_t,\mathcal L_t^{\otimes  l})
\end{equation}
for infinitely many positive values of $l$.

\noindent
Putting~\eqref{Xi1},~\eqref{Xi2},~\eqref{Xi3},~\eqref{Xi4} and~\eqref{Xi5} together we conclude that~\eqref{Xi2} and~\eqref{Xi4} are equalities for infinitely many positive values of $l$.
Now, looking at~\eqref{proj}, since~\eqref{Xi2} is an equality we see that $h^0(\mathcal K(l))=\chi(\mathcal K(l))=0$
for infinitely many positive values of $l$. Then $\mathcal K$, which is the ideal sheaf of $\widetilde Y$ inside
$\mathcal Y_0$, is $0$ so
\begin{equation}\label{central.fiber}
\widetilde Y=\mathcal Y_0.
\end{equation}
Similarly, looking at~\eqref{incl}, since~\eqref{Xi4} is an equality we see that $h^0(\mathcal Q(l))=\chi(\mathcal Q(l))=0$ for infinitely many positive values of $l$. Then $\mathcal Q=0$, so
\begin{equation}\label{isom}
 {\Pi_t}_*\mathcal O_{\mathcal X_t} \simeq \mathcal O_{\mathcal Y_t}.
 \end{equation}
In particular, ${\Pi_t}_*\mathcal O_{\mathcal X_t}$ is a locally free sheaf on $\mathcal Y_t$. On the other hand, $\Pi_t$ is finite, since $\pi$ is, so by~\cite[3.G]{Mat} (see also~\cite[Corollary 5.5]{KM}), $\Pi_t$ is flat. Finally,~\eqref{isom}  implies (see~\cite[Corollary III.11.3]{Hart}) that the fibers of $\Pi_t$ are connected, so $\Pi_t$ is an isomorphism and $\Phi_t$ is an embedding.
\end{proof}

\smallskip
\noindent
We use Proposition~\ref{intrinsic} to obtain the next theorem:

\begin{theorem}\label{cor.morphisms}
Let $\widetilde \varphi: \widetilde X \longrightarrow \mathbf P^N_\Delta$ be a first order infinitesimal deformation of $\varphi$ ($\Delta$ is Spec$(\kk[\epsilon]/\epsilon^2)$) and let $\nu$ be the class of $\widetilde \varphi$ in $H^0(\mathcal N_\varphi)$. If
\begin{enumerate}
\item there exists an algebraic formally semiuniversal deformation of $\varphi$ and $\varphi$ is unobstructed; and
\item $\Psi_2(\nu) \in \ \mathrm{Hom}(\mathcal I/\mathcal I^2,\mathcal E)$ is a surjective homomorphism,
\end{enumerate}
then there exist a flat family of morphisms, $\Phi: \mathcal X \longrightarrow \mathbf P^N_T$ over $T$, where $T$ is a smooth irreducible algebraic curve, and a closed point $0 \in T$, such that
\begin{enumerate}
\item[(a)] $\mathcal X_t$ is a smooth, irreducible, projective variety for all $t \in T$;
\item[(b)] the restriction of $\Phi$ to the first infinitesimal neighborhood of $0$ is $\widetilde \varphi$ (and hence $\Phi_0=\varphi$); and
\item[(c)] for any $t \in T$, $t \neq 0$, $\Phi_t$ is a closed immersion into $\mathbf P^N_t$.
\end{enumerate}
\end{theorem}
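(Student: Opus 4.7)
The plan is to produce $\Phi$ by pulling back the algebraic formally semiuniversal deformation of $\varphi$ to a suitable smooth curve through its distinguished point, and then to invoke Proposition~\ref{intrinsic} directly on the resulting family.

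I would first use hypothesis (1) to fix an algebraic formally semiuniversal deformation $\Phi^{\mathrm{univ}}\colon \mathcal X^{\mathrm{univ}}\to\mathbf P^N_S$ of $\varphi$, with distinguished closed point $s_0\in S$ at which the fibre is $\varphi$. The unobstructedness of $\varphi$ is precisely the statement that $S$ is smooth at $s_0$, and its Zariski tangent space $T_{s_0}S$ is canonically identified with $H^0(\mathcal N_\varphi)$, as recalled just before Proposition~\ref{morphism.miguel}. Under this identification $\nu$ singles out a tangent direction, so I can choose a smooth irreducible algebraic curve $T\subset S$ through $s_0$ whose tangent line there is spanned by $\nu$. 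Setting $\mathcal X:=\mathcal X^{\mathrm{univ}}\times_S T$ and $\Phi:=\Phi^{\mathrm{univ}}|_T$, property (b) together with $\Phi_0=\varphi$ is built in by construction: the class in $H^0(\mathcal N_\varphi)$ of the restriction of $\Phi$ to the first infinitesimal neighborhood of $0\in T$ is $\nu$, and since first order infinitesimal deformations are determined by their class in $H^0(\mathcal N_\varphi)$, this restriction agrees with $\widetilde\varphi$.

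Next I would verify the geometric hypotheses needed to apply Proposition~\ref{intrinsic}. The morphism $\mathcal X\to T$ is flat and projective, being a pullback of the flat semiuniversal family, and its central fibre is $\mathcal X_0=X$. By openness of smoothness and of geometric irreducibility in flat proper families, and after possibly shrinking $T$ around $0$, every fibre $\mathcal X_t$ is smooth, projective and irreducible, which gives (a). Flatness of $\mathcal X\to T$ over the integral curve $T$ together with integral fibres then forces $\mathcal X$ itself to be irreducible and reduced. Thus all the hypotheses of Proposition~\ref{intrinsic} hold and, since by hypothesis (2) the homomorphism $\Psi_2(\nu)$ is surjective, that proposition yields (c) after at most one further shrinking of $T$.

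The main obstacle, as I see it, is the one already absorbed into hypothesis (1), namely integrating the purely infinitesimal datum $\nu$ to an actual algebraic one-parameter family rather than merely to a formal arc. This is exactly what an \emph{algebraic} formally semiuniversal deformation together with unobstructedness buys us, because then $S$ is a genuine smooth algebraic scheme at $s_0$ and every tangent direction is realized by a smooth algebraic curve germ through $s_0$. Once this is in place the rest is essentially bookkeeping, since Proposition~\ref{intrinsic} has already done the real work of converting the surjectivity of $\Psi_2(\nu)$ into the conclusion that $\Phi_t$ is a closed immersion for $t\neq 0$ close to $0$.
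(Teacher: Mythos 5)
Your proposal is correct and follows essentially the same route as the paper: use unobstructedness to get a smooth base $M$ for the algebraic formally semiuniversal deformation with tangent space $H^0(\mathcal N_\varphi)$ at $[\varphi]$, choose a smooth algebraic curve through $[\varphi]$ in the direction of $\nu$, and apply Proposition~\ref{intrinsic} to the induced family, whose hypotheses (a) and the surjectivity of $\Psi_2(\nu)$ yield (c). The extra verifications you spell out (openness of smoothness and irreducibility, reducedness and irreducibility of the total space after shrinking $T$) are implicit in the paper's argument and are handled correctly.
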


\begin{proof}
There exists a smooth algebraic variety $M$ which is the base of the algebraic formally semiuniversal deformation
of $\varphi$ and $H^0(\mathcal N_\varphi)$ is the tangent space of $M$ at $[\varphi]$.
Then $\nu$ represents a tangent vector to $M$ at $[\varphi]$. Thus there is a smooth irreducible algebraic curve $T \subset M$ passing through $[\varphi]$ and a family of morphisms over $T$,  $\Phi: \mathcal X \longrightarrow \mathbf P^N_T$ satisfying (1), (2) and (3) in Proposition~\ref{intrinsic} and $\Phi_\Delta=\widetilde \varphi$. Now, since $\Psi_2(\nu)$ is a surjective homomorphism, Proposition~\ref{intrinsic} implies (c).
\end{proof}

\noindent Theorem~\ref{cor.morphisms} has this straight--forward corollary:

\begin{corollary}\label{cor2.morphisms}
 If $\varphi$ satisfies Condition (1) of Theorem~\ref{cor.morphisms} and the image of $\Psi_2$ in $\mathrm{Hom}(\mathcal I/\mathcal I^2,\mathcal E)$ contains a surjective homomorphism, then $\varphi$ can be deformed to an embedding.
\end{corollary}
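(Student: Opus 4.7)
The plan is simply to unwind the hypothesis and reduce to Theorem~\ref{cor.morphisms}. By assumption, the image of the homomorphism $\Psi_2\colon H^0(\mathcal N_\varphi) \to \mathrm{Hom}(\mathcal I/\mathcal I^2, \mathcal E)$ contains a surjective element, so I would choose some $\nu \in H^0(\mathcal N_\varphi)$ with $\Psi_2(\nu)$ surjective. By the identification of $H^0(\mathcal N_\varphi)$ with the space of first order infinitesimal deformations of $\varphi$ recalled right after~\eqref{normal.sheaf} (see~\cite[Lemma 3.4.7(iii)]{Ser}), this $\nu$ is the class of some concrete first order infinitesimal deformation $\widetilde\varphi\colon \widetilde X \to \mathbf P^N_\Delta$ of $\varphi$, where $\Delta = \mathrm{Spec}(\mathbf k[\epsilon]/\epsilon^2)$.

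Then both hypotheses of Theorem~\ref{cor.morphisms} are in place: Condition~(1) is assumed verbatim, and Condition~(2) holds by the very choice of $\nu$. Applying Theorem~\ref{cor.morphisms} to this $\widetilde\varphi$ produces a smooth irreducible algebraic curve $T$, a closed point $0 \in T$, and a flat family of morphisms $\Phi\colon \mathcal X \to \mathbf P^N_T$ with $\Phi_0 = \varphi$ whose general fiber $\Phi_t$ is a closed immersion. This is exactly the statement that $\varphi$ deforms to an embedding.

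There is no genuine obstacle here; the corollary is a direct repackaging of Theorem~\ref{cor.morphisms} in which one no longer insists on specifying a particular first order deformation in advance but merely on the existence of one with the required surjectivity. The only small subtlety worth flagging is that the choice of $\nu$ is not canonical — any preimage of a surjective element under $\Psi_2$ will do — and all the work (flatness of the image family, Hilbert polynomial comparison, and the connectedness argument for $\Pi_t$) has already been performed inside the proof of Proposition~\ref{intrinsic}, which feeds into Theorem~\ref{cor.morphisms}.
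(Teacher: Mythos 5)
Your proposal is correct and matches the paper's intent exactly: the paper states this as a ``straight--forward corollary'' of Theorem~\ref{cor.morphisms} with no written proof, and the argument is precisely the one you give — pick $\nu \in H^0(\mathcal N_\varphi)$ with $\Psi_2(\nu)$ surjective, take the corresponding first order deformation $\widetilde\varphi$, and apply Theorem~\ref{cor.morphisms}. Nothing further is needed.
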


\begin{remark}\label{cor.morphisms.2}
{\rm Theorem~\ref{cor.morphisms} holds in a very general setting. Indeed, $\varphi$ possesses an 
algebraic formally semiuniversal deformation  if $X$ itself possesses an algebraic formally semiuniversal deformation. 
This is because $\varphi$ is non--degenerate by generic smoothness 
(for the definition of non--degenerate morphism, see~\cite[p. 376]{Hor1} or~\cite[Definition 3.4.5]{Ser}). 
On the other hand, the existence of an algebraic formally semiuniversal deformation of $X$ follows if $X$ has 
an effective formal semiuniversal deformation, because the latter is algebraizable by 
Artin's algebraization theorem (see \cite{Artin.alg}). For example, if $\varphi$ is the canonical map 
of a smooth variety of general type $X$ with ample and base--point-free canonical bundle, $\varphi$ possesses 
an algebraic formally semiuniversal deformation (see \cite[Lemma 2.3]{canonical}).}
\end{remark}

\noindent If $X$ is a curve, then the existence of algebraic formally semiuniversal deformations is trivial and, in order to know if $\varphi$ is unobstructed, it suffices to check some cohomology vanishings on $Y$. Thus we end this section obtaining from Theorem~\ref{cor.morphisms} this neat statement regarding morphisms from curves:

\begin{theorem}\label{curves}
Assume $Y$ is a curve. Let $\widetilde \varphi: \widetilde X \longrightarrow \mathbf P^N_\Delta$ be a first order infinitesimal deformation of $\varphi$ ($\Delta$ is Spec$(\kk[\epsilon]/\epsilon^2)$) and let $\nu$ be the class of $\widetilde \varphi$ in $H^0(\mathcal N_\varphi)$.
If
\begin{enumerate}
\item $h^1(\SO_Y(1))=0, 
\, h^1(\SE \otimes \SO_Y(1))=0$; and
\item $\Psi_2(\nu) \in \ \mathrm{Hom}(\mathcal I/\mathcal I^2,\mathcal E)$ is a surjective homomorphism,
\end{enumerate}
then there exist a flat family of morphisms, $\Phi: \mathcal X \longrightarrow \mathbf P^N_T$ over $T$, where $T$ is a smooth irreducible algebraic curve, and a closed point $0 \in T$, such that
\begin{enumerate}
\item[(a)] $\mathcal X_t$ is a smooth, irreducible, projective curve for all $t \in T$;
\item[(b)] the restriction of $\Phi$ to the first infinitesimal neighborhood of $0$ is $\widetilde \varphi$ (and hence $\Phi_0=\varphi$); and
\item[(c)] for any $t \in T$, $t \neq 0$, $\Phi_t$ is a closed immersion into $\mathbf P^N_t$.
\end{enumerate}
\end{theorem}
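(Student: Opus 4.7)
The plan is to reduce Theorem~\ref{curves} to Theorem~\ref{cor.morphisms}. Hypothesis (2) of the two statements is identical, so all the work goes into verifying hypothesis (1) of Theorem~\ref{cor.morphisms}: that $\varphi$ admits an algebraic formally semiuniversal deformation and is unobstructed.

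For the existence of the semiuniversal deformation, I would argue as follows. Since $\pi$ is finite and $Y$ is a curve, $X$ is itself a smooth irreducible projective curve. Curves admit an effective formal semiuniversal deformation by classical Schlessinger-type arguments, and Artin's algebraization theorem turns it into an algebraic one; Remark~\ref{cor.morphisms.2} then transfers this to $\varphi$, using that $\varphi$ is non-degenerate by generic smoothness.

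For unobstructedness I would show $H^1(\SN_\varphi)=0$. The normal sheaf sequence~\eqref{normal.sheaf}, together with $H^2(\ST_X)=0$ (which holds because $X$ is a curve), yields a surjection $H^1(\varphi^*\ST_{\PP^N}) \twoheadrightarrow H^1(\SN_\varphi)$. Pulling back the Euler sequence on $\PP^N$ along $\varphi$ and using $H^2(\SO_X)=0$ yields in turn a surjection $H^1(\varphi^*\SO_{\PP^N}(1))^{\oplus(N+1)} \twoheadrightarrow H^1(\varphi^*\ST_{\PP^N})$. Since $\varphi=i\circ\pi$ with $\pi$ finite and $\pi_*\SO_X=\SO_Y\oplus\SE$, the projection formula gives
\[
H^1(X,\varphi^*\SO_{\PP^N}(1)) \;=\; H^1(Y,\SO_Y(1))\,\oplus\,H^1(Y,\SE\otimes\SO_Y(1)),
\]
and both summands vanish by hypothesis (1). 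Chaining back through the two sequences, $H^1(\SN_\varphi)=0$, so $\varphi$ is unobstructed.

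With hypothesis (1) of Theorem~\ref{cor.morphisms} in place, that theorem delivers the family $\Phi: \mathcal X \longrightarrow \PP^N_T$, the point $0 \in T$, and conclusions (a), (b), (c) in exactly the form required by Theorem~\ref{curves}. I do not expect a real obstacle here: the only mildly delicate point is invoking the algebraization part of Remark~\ref{cor.morphisms.2} to pass from a formal to an algebraic semiuniversal deformation of $\varphi$, while the rest is a routine long-exact-sequence chase relying crucially on the dimensional vanishings $H^2(\ST_X)=H^2(\SO_X)=0$ available on a curve.
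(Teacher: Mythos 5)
Your proposal is correct and follows essentially the same route as the paper's proof: reduce to Theorem~\ref{cor.morphisms}, obtain the algebraic formally semiuniversal deformation of $\varphi$ from that of the curve $X$ via Remark~\ref{cor.morphisms.2}, and deduce $h^1(\mathcal N_\varphi)=0$ (hence unobstructedness) from hypothesis (1) by chasing the pulled-back Euler sequence and the normal sheaf sequence~\eqref{normal.sheaf}, using $h^2(\mathcal O_X)=h^2(\mathcal T_X)=0$ on a curve. The only difference is that you spell out the projection-formula splitting $h^1(\pi^*\mathcal O_Y(1))=h^1(\mathcal O_Y(1))+h^1(\mathcal E\otimes\mathcal O_Y(1))$, which the paper leaves implicit.
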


\begin{proof}
Condition (2) of Theorem~\ref{cor.morphisms} holds by assumption. We check now that Condition (1) of Theorem~\ref{cor.morphisms} also holds. Since $X$ is a curve, then $h^2(\SO_X)=0$ and $X$ has an algebraic formally semiuniversal deformation. By Remark~\ref{cor.morphisms.2}, $\varphi$ also has an algebraic formally semiuniversal deformation.
On the other hand, (1) implies $h^1(\pi^{\ast}\SO_Y(1))=0$, so from the pullback to $X$ of the Euler sequence we get $h^1(\varphi^{\ast}\ST_{\PP^N})=0$. In addition $h^2(\ST_X)=0$, so it follows from the sequence~\eqref{normal.sheaf} that $h^1(\mathcal N_\varphi)=0$. Thus $\varphi$ is unobstructed so Condition (1) of Theorem~\ref{cor.morphisms} is satisfied.
\end{proof}

\medskip
\noindent
\begin{remark}\label{compare.Compositio}
{\rm We compare Theorem~\ref{curves} with~\cite[Theorem 1.1]{GGP}. In a sense, \cite[Theorem 1.1]{GGP} can be considered stronger because no condition is required from $\Psi_2(\nu)$. However the statement of~\cite[Theorem 1.1]{GGP} needs the hypothesis
\begin{equation}\label{ged.N+1}
 h^0(\pi^{\ast}\SO_Y(1))\geq N+1,
\end{equation}
 which is not required in Theorem~\ref{curves} (under the hypothesis of Theorem~\ref{curves}, \eqref{ged.N+1} holds nevertheless if, for instance, the embedded rope on $Y$ that corresponds to $\Psi_2(\nu)$ is not contained in a hyperplane of $\mathbf P^N$). If one compares the proofs of Theorem~\ref{curves} and~\cite[Theorem 1.1]{GGP}, one sees that the latter is more ad--hoc and involved as it uses properties of the Hilbert scheme and the moduli of curves. In contrast, the proof of Theorem~\ref{curves} comes from more general principles and fits in the general setting dealt by Theorem~\ref{cor.morphisms}, which applies to varieties of arbitrary dimension.}
\end{remark}

\begin{remark}
{\rm In view of Remark~\ref{compare.Compositio} and since Hom$(\mathcal I/\mathcal I^2,\mathcal E)$ possesses elements which are not surjective homomorphisms, \cite[Theorem 1.1]{GGP} shows that the surjectivity of $\Psi_2(\nu)$  (see Proposition~\ref{intrinsic} and Theorem~\ref{cor.morphisms}, (2)) is not in general a necessary condition for $\varphi$ to be deformed  to an embedding (for a geometric application of this to the smoothing of certain non locally Cohen--Macaulay multiple structures, see~\cite{nonCM}).}
\end{remark}

\section{A first application: smoothing ropes}\label{ropes.section}

\medskip

In this section we use the results of Section~\ref{intrinsic.section} to obtain results on the smoothing of ropes (recall Definition~\ref{defrope}). More precisely, we apply Proposition~\ref{intrinsic} to obtain a sufficient condition, Theorem~\ref{cor.ropes} to smooth ropes. As a consequence, we obtain Proposition~\ref{canonical.carpets.smoothing} on the smoothing of canonically embedded carpets. We also give simpler, more conceptual and unified proofs of~\cite[Theorem 2.4]{GGP}, \cite[Theorem 3.5]{GP} and \cite[Theorem 3.5]{GGP2} on the smoothing of ropes on curves and $K3$--carpets. We start by making precise what we mean by smoothing a rope:

\begin{definition}
With the notation of~\ref{setup}, let $\WY$ be a rope on a smooth variety $Y$ and let $\tilde i: \widetilde Y \hookrightarrow \mathbf P^N$ be an embedding of $\widetilde Y$ in projective space that extends $i$. We say that $\tilde i(\WY)$ can be smoothed inside $\mathbf P^N$ if there exist a smooth irreducible algebraic curve $T$, a closed point $0 \in T$ and a closed subscheme $\mathcal Y$ of $\mathbf P^N_T$, flat over $T$, such that $\tilde i(\widetilde Y)=\mathcal Y_0$ and $\mathcal Y_t$ is a  smooth and irreducible projective variety  if $t \neq 0$.
\end{definition}

\begin{theorem}\label{cor.ropes}
With the notation of~\ref{setup}, let $\widetilde Y$ be a (multiplicity $n$) rope on $Y$ with conormal bundle $\mathcal E$. Let $\tilde i: \widetilde Y \hookrightarrow \mathbf P^N$ be an embedding of $\widetilde Y$ in projective space that extends $i$ and let $\nu_2$ be the element of $\mathrm{Hom}(\mathcal I/\mathcal I^2,\mathcal E)$ corresponding to $(\widetilde Y,\tilde i)$ (see~\cite[Proposition 2.1 (1)]{Gon}).
If
\begin{enumerate}
\item there exists an algebraic formally semiuniversal deformation of $\varphi$ and $\varphi$ is unobstructed; and
\item $\nu_2$ is in the image of $\Psi_2$,
\end{enumerate}
then $\tilde i(\WY)$ can be smoothed inside $\mathbf P^N$.
\end{theorem}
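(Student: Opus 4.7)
The plan is to deduce Theorem~\ref{cor.ropes} directly from Theorem~\ref{cor.morphisms} by taking, as the smoothing family of the rope, the image in $\mathbf P^N_T$ of the family of morphisms produced by that theorem.

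First I would unpack hypothesis (2): choose $\nu \in H^0(\mathcal N_\varphi)$ with $\Psi_2(\nu)=\nu_2$, and let $\widetilde\varphi$ be the first order infinitesimal deformation of $\varphi$ represented by $\nu$. Because $\tilde i$ is an embedding, \cite[Proposition 2.1 (2)]{Gon} guarantees that $\nu_2=\Psi_2(\nu)$ is a surjective homomorphism in $\mathrm{Hom}(\mathcal I/\mathcal I^2,\mathcal E)$. Combined with hypothesis (1), this means that both hypotheses of Theorem~\ref{cor.morphisms} are in force for $\widetilde\varphi$.

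Applying Theorem~\ref{cor.morphisms} I obtain a smooth irreducible algebraic curve $T$, a point $0\in T$ and a flat family of morphisms $\Phi: \mathcal X \longrightarrow \mathbf P^N_T$ over $T$ with $\Phi_0=\varphi$, $\Phi_\Delta=\widetilde\varphi$, every fiber $\mathcal X_t$ smooth, irreducible and projective, and $\Phi_t$ a closed immersion for every $t\neq 0$. I then define $\mathcal Y:=\Phi(\mathcal X) \subset \mathbf P^N_T$ and check that it is the required smoothing. Since $\mathcal X$ is irreducible and reduced, these properties are inherited by $\mathcal Y=\Phi(\mathcal X)$; as $\mathcal Y \longrightarrow T$ is dominant onto the smooth curve $T$, it is flat. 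For every $t\neq 0$, $\Phi_t$ being a closed immersion yields $\mathcal Y_t \simeq \mathcal X_t$, a smooth irreducible projective variety. Finally, for the identification $\mathcal Y_0 = \tilde i(\widetilde Y)$ of the central fiber, I would appeal directly to the proof of Proposition~\ref{intrinsic}: under the surjectivity of $\Psi_2(\nu)$, equation~\eqref{central.fiber} already establishes precisely this equality.

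The bulk of the argument is thus absorbed in Theorem~\ref{cor.morphisms} and in the special--fiber analysis of Proposition~\ref{intrinsic}; the only residual point to track is the flatness of $\mathcal Y$ over $T$, which however follows from the standard fact that a dominant morphism from an integral scheme to a smooth curve is flat. Conceptually, Theorem~\ref{cor.ropes} is the ``subscheme'' counterpart of Theorem~\ref{cor.morphisms}: the rope $\widetilde Y$, encoded by the surjection $\nu_2 \in \mathrm{Hom}(\mathcal I/\mathcal I^2,\mathcal E)$, lifts to a genuine first order deformation of $\varphi$, which extends to an algebraic one--parameter deformation, and the image of this deformation realises $\tilde i(\widetilde Y)$ as the flat limit of smooth embedded varieties.
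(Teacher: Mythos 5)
Your proposal is correct and follows essentially the same route as the paper: lift $\nu_2$ to $\nu\in H^0(\mathcal N_\varphi)$, note that $\nu_2$ is surjective because $\tilde i$ is an embedding (\cite[Proposition 2.1 (2)]{Gon}), produce the one--parameter family of morphisms as in Theorem~\ref{cor.morphisms}, set $\mathcal Y=\Phi(\mathcal X)$, and identify the central fiber via equation~\eqref{central.fiber} from the proof of Proposition~\ref{intrinsic}. The paper likewise re-enters the proofs of Theorem~\ref{cor.morphisms} and Proposition~\ref{intrinsic} for the flatness of $\mathcal Y$ and the equality $\mathcal Y_0=\tilde i(\widetilde Y)$, so your residual checks match its argument.
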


\begin{proof}
Let $\nu$ be an element of $H^0(\mathcal N_\varphi)$ such that $\Psi_2(\nu)=\nu_2$ and let $\widetilde \varphi$
be the first order infinitesimal deformation of $\varphi$ associated to $\nu$ (see~\cite[(3.3.2)]{Gon}). As in the proof of Theorem \ref{cor.morphisms}, there is a smooth irreducible algebraic curve $T \subset M$ passing through $[\varphi]$ and a family of morphisms over $T$, $\Phi: \mathcal X \longrightarrow \mathbf P^N_T$ satisfying (1), (2) and (3) in Proposition~\ref{intrinsic} and $\Phi_\Delta=\widetilde \varphi$. Now let $\mathcal Y=\Phi(\mathcal X)$. Since $\tilde i$ is an embedding, $\nu_2$ is surjective (see~\cite[Proposition 2.1 (2)]{Gon} or~\cite{HV}), so $\Psi_2(\nu)=\nu_2$ implies, by Proposition~\ref{intrinsic}, that $\Phi_t$ is a closed immersion for any $t \in T$ general. Thus $\mathcal Y_t=\Phi_t(\mathcal X_t)$ is a smooth and irreducible variety. Finally,~\eqref{central.fiber} says that $\mathcal Y_0=\tilde i(\widetilde Y)$.
\end{proof}

\medskip

\noindent As a first consequence of Theorem~\ref{cor.ropes} we will prove a result,  Proposition~\ref{canonical.carpets.smoothing},
on the smoothing of canonically embedded carpets. Before that, we need to define these carpets and to characterize them. Examples of canonical carpets that can be smoothed are the complete intersection of a smooth quadric a doubled quadric in $\mathbf P^4$ and, by Proposition~\ref{canonical.carpets.smoothing}, the canonical carpets of Corollary~\ref{canonical.carpet}.

\begin{definition}\label{defcanonicalcarpet}
Let $Y$ be a smooth surface and let $i: Y  \hookrightarrow \mathbf P^N$ be an embedding of $Y$ in $\mathbf P^N$. Let $\widetilde Y$ be a multiplicity $2$ rope supported on $Y$ and let ${\tilde i}: \widetilde Y  \hookrightarrow \mathbf P^N$ be an embedding of $\WY$ in $\mathbf P^N$ that extends $i$. We say that $\WY$ is canonically embedded by $\tilde i$ or, for short, that $\tilde i(\WY)$ is a canonical carpet, if the dualizing sheaf $\omega_{\widetilde Y}$ of $\WY$ is very ample and $\tilde i$ is induced by the complete linear series of $\omega_{\widetilde Y}$.
\end{definition}

\begin{proposition}\label{canonical.carpet.char}
With the same notation of Definition~\ref{defcanonicalcarpet},
if $\tilde i(\widetilde Y)$ is a canonical carpet then its conormal bundle is $\omega_Y(-1)$. If we assume in addition that
\begin{enumerate}
\item[(a)] $Y$ is a regular surface with $p_g(Y)=0$;
\item[(b)] $i$ is induced by a complete linear series on $Y$; and
\item[(c)] $h^1(\mathcal O_Y(1))=0$,
\end{enumerate}
then the converse is also true.
\end{proposition}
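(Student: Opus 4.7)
The plan is to handle the two directions separately, both relying on standard exact sequences associated to the carpet structure together with Serre duality on the surface $Y$. For the forward direction, since the conormal $\mathcal F$ is a line bundle, $\WY$ is locally Gorenstein, so $\omega_{\WY}$ is invertible. Applying $\mathcal{H}om_{\mathcal{O}_{\WY}}(-,\omega_{\WY})$ to the defining sequence $0\to \mathcal F\to \mathcal{O}_{\WY}\to \mathcal{O}_Y\to 0$ and using $\mathcal F^2=0$ produces the exact sequence
$$0\longrightarrow \omega_Y\longrightarrow \omega_{\WY}\longrightarrow \omega_Y\otimes \mathcal F^\vee\longrightarrow 0,$$
identifying in particular $\omega_{\WY}|_Y\cong \omega_Y\otimes \mathcal F^\vee$. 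If $\tilde i(\WY)$ is a canonical carpet, then $\omega_{\WY}\cong \mathcal{O}_{\WY}(1)$, and restricting to $Y$ gives $\mathcal{O}_Y(1)\cong \omega_Y\otimes \mathcal F^\vee$, hence $\mathcal F\cong \omega_Y(-1)$.

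For the converse, assume $\mathcal F=\omega_Y(-1)$ and conditions (a)--(c). I would first establish $\omega_{\WY}\cong \mathcal{O}_{\WY}(1)$ as invertible sheaves on $\WY$. The key tool is the multiplicative exact sequence of sheaves of abelian groups
$$1\longrightarrow 1+\mathcal F\longrightarrow \mathcal{O}_{\WY}^\ast\longrightarrow \mathcal{O}_Y^\ast\longrightarrow 1,$$
together with the isomorphism $1+\mathcal F\cong \mathcal F$ (via $1+f\mapsto f$, valid because $\mathcal F^2=0$). Taking cohomology shows that $\ker(\mathrm{Pic}(\WY)\to \mathrm{Pic}(Y))$ is the image of $H^1(\mathcal F)=H^1(\omega_Y(-1))$. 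By Serre duality on the surface $Y$, $H^1(\omega_Y(-1))\cong H^1(\mathcal{O}_Y(1))^\vee$, which vanishes by (c). Hence $\mathrm{Pic}(\WY)\to \mathrm{Pic}(Y)$ is injective, and since both $\omega_{\WY}$ and $\mathcal{O}_{\WY}(1)$ restrict to $\mathcal{O}_Y(1)$ on $Y$ (for $\omega_{\WY}$ this uses the first part of the proposition: $\omega_{\WY}|_Y=\omega_Y\otimes \mathcal F^\vee=\mathcal{O}_Y(1)$), we conclude $\omega_{\WY}\cong \mathcal{O}_{\WY}(1)$; in particular, $\omega_{\WY}$ is very ample.

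It remains to check that $\tilde i$ is induced by the complete linear series of $\omega_{\WY}$. Taking cohomology in the sequence $0\to \omega_Y\to \omega_{\WY}\to \mathcal{O}_Y(1)\to 0$ and using $H^0(\omega_Y)=0$ (from $p_g(Y)=0$) and $H^1(\omega_Y)\cong H^1(\mathcal{O}_Y)^\vee=0$ (from regularity of $Y$), both supplied by (a), yields $h^0(\omega_{\WY})=h^0(\mathcal{O}_Y(1))$, which equals $N+1$ by (b). Since $Y\subset \WY$ is nondegenerate in $\PP^N$ (because $i$ is induced by the complete series $|\mathcal{O}_Y(1)|$), the pullback $H^0(\PP^N,\mathcal{O}(1))\to H^0(\WY,\omega_{\WY})$ is injective between spaces of equal dimension, hence an isomorphism. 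I expect the main delicate point to be the Picard injectivity in step one of the converse, since this is precisely where hypotheses (a) and (c) must combine, via Serre duality, to control $H^1(\mathcal F)$ and thereby upgrade the numerical equality $\omega_{\WY}|_Y=\mathcal{O}_{\WY}(1)|_Y$ to an honest identification of line bundles on the carpet.
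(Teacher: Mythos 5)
Your proposal is correct and is essentially the argument the paper outsources to its citations: the first part is exactly the ribbon dualizing-sheaf sequence $0\to\omega_Y\to\omega_{\widetilde Y}\to\omega_Y\otimes\mathcal F^{\vee}\to 0$ (this is \cite[(1.4.2)]{GGP2}), and your converse is the same cohomological argument, patterned on \cite[Proposition 1.5]{GGP2}, that the paper invokes. The only (cosmetic) difference is that you upgrade $\omega_{\widetilde Y}|_Y\cong\mathcal O_Y(1)$ to $\omega_{\widetilde Y}\cong\mathcal O_{\widetilde Y}(1)$ via injectivity of $\mathrm{Pic}(\widetilde Y)\to\mathrm{Pic}(Y)$ from the truncated exponential sequence, rather than by lifting the section $1\in H^0(\mathcal O_Y)$ to a nowhere-vanishing section of $\omega_{\widetilde Y}(-1)$; both hinge on the same vanishing $H^1(\omega_Y(-1))\cong H^1(\mathcal O_Y(1))^{\vee}=0$ coming from (c) alone, with (a) and (b) entering only in the completeness-of-the-linear-series step.
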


\begin{proof}
The first claim follows from ~\cite[(1.4.2)]{GGP2}. The second claim follows from (a), (b), (c) and~\cite[Remark 1.3 and Lemma 1.4]{GGP2}, arguing similarly to the proof of~\cite[Proposition 1.5]{GGP2}.
\end{proof}

\begin{proposition}\label{canonical.carpets.smoothing}
 Let $Y$ be a smooth surface embedded in $\mathbf P^N$ by a morphism $i$ and let $\widetilde Y$ be a canonical carpet supported on $i(Y)$. If
\begin{enumerate}
\item[(a)] $p_g(Y)=0$;
\item[(b)] $i$ is induced by a complete linear series on $Y$; and
\item[(c)] $|\omega_Y^{-2}(2)|$ has a smooth divisor $B$,
 \end{enumerate}
then there exists a double cover $\pi: X \to Y$ branched along $B$ such that $X$ is an irreducible, smooth surface of general type with ample and base--point--free canonical divisor and whose canonical map $\varphi$ factors as $i \circ \pi$.

\smallskip

\noindent If, in addition,
\begin{enumerate}
\item[(d)] $\varphi$ is unobstructed; and
\item[(e)] $h^1(\mathcal O_B(B))=0$, 
\end{enumerate}
then
$\WY$ can be smoothed inside $\mathbf P^N$. More precisely, there exist a smooth algebraic curve $T$, a closed point $0 \in T$ and a family of subschemes $\SY \subset \PP_T^N$, flat over $T$, such that
\begin{enumerate}
\item
if $t \neq 0$, then $\SY_t$ is a smooth, irreducible, canonically embedded surface in $\PP^N$, and
\item if $t=0$, then $\SY_0$ is $\WY$.
\end{enumerate}
\end{proposition}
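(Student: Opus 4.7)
The plan is to construct $X$ as an explicit double cover of $Y$, identify $\varphi = i \circ \pi$ with the canonical morphism of $X$, and then apply Theorem~\ref{cor.ropes} to smooth $\widetilde Y$. To build $\pi$, set $L = \omega_Y^{-1}(1)$: by hypothesis (c), $L^{\otimes 2} \cong \mathcal O_Y(B)$, so the section of $L^{\otimes 2}$ cutting out $B$ produces a flat double cover $\pi \colon X \to Y$ branched along $B$, with trace-zero module $\mathcal E = L^{-1} = \omega_Y(-1)$. Smoothness of $B$ gives smoothness of $X$, and $X$ is irreducible because $L \not\cong \mathcal O_Y$ (otherwise $\omega_Y \cong \mathcal O_Y(1)$ would have a section, contradicting $p_g(Y) = 0$ via (b)). By Proposition~\ref{canonical.carpet.char}, this $\mathcal E$ coincides with the conormal bundle of $\widetilde Y$. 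The double-cover formula yields $\omega_X = \pi^*(\omega_Y \otimes L) = \pi^*\mathcal O_Y(1)$, which is ample (pullback of ample by a finite morphism) and base-point-free, so $X$ is of general type; the projection formula together with $p_g(Y) = 0$ gives
$$H^0(\omega_X) = H^0(\mathcal O_Y(1)) \oplus H^0(\omega_Y) = H^0(\mathcal O_Y(1)),$$
and hypothesis (b) therefore identifies the canonical morphism of $X$ with $i \circ \pi = \varphi$.

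Now I would apply Theorem~\ref{cor.ropes} to $(\varphi, \widetilde Y)$. Its hypothesis (1) is supplied by Remark~\ref{cor.morphisms.2} (since $\omega_X$ is ample and base-point-free) together with (d). The main obstacle is hypothesis (2), namely that $\nu_2 \in \mathrm{Hom}(\mathcal I/\mathcal I^2, \mathcal E)$ lies in the image of $\Psi_2$; I would in fact prove outright that $\Psi_2$ is surjective. From the normal-sheaf sequence
$$0 \longrightarrow \mathcal N_\pi \longrightarrow \mathcal N_\varphi \longrightarrow \pi^*\mathcal N_i \longrightarrow 0$$
and the projection formula, the piece
$$H^0(\mathcal N_\varphi) \longrightarrow \mathrm{Hom}(\mathcal I/\mathcal I^2, \mathcal O_Y) \oplus \mathrm{Hom}(\mathcal I/\mathcal I^2, \mathcal E) \longrightarrow H^1(\mathcal N_\pi)$$
is exact, so surjectivity of $\Psi_2$ reduces to the vanishing $H^1(\mathcal N_\pi) = 0$. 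Let $R \subset X$ be the ramification divisor, so that $\pi|_R \colon R \xrightarrow{\sim} B$; a local model for $\pi$ along $R$ (in coordinates $(x, y) \mapsto (x, y^2)$) identifies the cokernel $\mathcal N_\pi$ of $d\pi \colon T_X \to \pi^* T_Y$ with a line bundle on $R$ canonically isomorphic to $(\pi|_R)^* N_{B/Y}$. Therefore
$$H^1(\mathcal N_\pi) \cong H^1(B, \mathcal O_B(B)) = 0$$
by (e), and Theorem~\ref{cor.ropes} produces a flat family $\mathcal Y \subset \mathbf P^N_T$ with $\mathcal Y_0 = \widetilde Y$ and $\mathcal Y_t$ smooth and irreducible for $t \neq 0$.

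Finally, to see that each $\mathcal Y_t$ is canonically embedded, I would note that in the proof of Theorem~\ref{cor.ropes} the family $\mathcal Y$ arises as $\Phi(\mathcal X)$ for a deformation $\Phi \colon \mathcal X \to \mathbf P^N_T$ of the canonical morphism $\varphi$, with $\Phi_t$ a closed immersion of $\mathcal X_t$. Since $\varphi$ is the complete canonical morphism of $X$ and $X$ has ample, base-point-free canonical, the standard deformation theory of canonical morphisms (as in~\cite[Lemma 2.3]{canonical}) shows that $\Phi_t$ is again the complete canonical morphism of $\mathcal X_t$; this, combined with the equality $h^0(\omega_X) = h^0(\mathcal O_Y(1)) = N+1$ and semicontinuity, forces $\mathcal Y_t = \Phi_t(\mathcal X_t)$ to be canonically embedded in $\mathbf P^N$.
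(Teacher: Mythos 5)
Your proof is correct and follows essentially the same route as the paper's: the same double cover with trace--zero module $\omega_Y(-1)$, the same identification of $\varphi=i\circ\pi$ with the canonical morphism, and the same application of Theorem~\ref{cor.ropes}, with Condition (2) obtained from $H^1(\mathcal N_\pi)\cong H^1(\mathcal O_B(B))=0$ forcing surjectivity of $\Psi_2$ (you simply re-derive what the paper cites from \cite[(3.3.2)]{Gon} and \cite[(2.11)]{canonical}). Your closing paragraph on why the general fibre is canonically embedded spells out a point the paper's proof leaves implicit, which is a harmless addition.
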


\begin{proof}
Since $B$ is smooth, we can construct a smooth double cover $\pi: X \longrightarrow Y$ whose trace zero module $\mathcal E$ is $\omega_Y(-1)$, which is, by Proposition~\ref{canonical.carpet.char}, the conormal bundle of $\WY$. The canonical bundle of $X$ is $\pi^*\mathcal O_Y(1)$ so $X$ is a smooth surface of general type with ample and base--point--free canonical divisor. By (a) and (b) the canonical morphism $\varphi$ of $X$ factors as $\varphi= i \circ \pi$.
Since $X$ is a surface of general type with ample and base--point--free canonical divisor, Condition (1) of Theorem~\ref{cor.ropes} holds once we have in account Remark~\ref{cor.morphisms.2} and (d). On the other hand, by~\cite[(2.11)]{canonical} and (e), $h^1(\mathcal N_\pi)=0$, so $\Psi_2$ is surjective (see~\cite[(3.3.2)]{Gon}) and Condition (2) of Theorem~\ref{cor.ropes} also holds.
\end{proof}

\medskip

\noindent
When we specialize Theorem~\ref{cor.ropes} to curves, hypotheses become simplified and we get a nice, clean statement. This result is in fact~\cite[Theorem 2.4]{GGP} but the proof given here is much clearer and more conceptual:

\begin{theorem}\label{curves.2}
Let $Y$ be a curve. Let $\widetilde Y$ be a (multiplicity $n$) rope on $Y$ with conormal bundle $\mathcal E$. Assume there is an embedding $\, \tilde i: \widetilde Y \hookrightarrow \mathbf P^N \,$ of $\, \widetilde Y \,$ in projective space that extends $i$.
If
\begin{equation}\label{curves.2.equation}
h^1(\SO_Y(1))=0 \;\; \text{and}\;\; h^1(\SE \otimes \SO_Y(1))=0,
\end{equation}
then $\tilde i(\WY)$ can be smoothed inside $\mathbf P^N$.
\end{theorem}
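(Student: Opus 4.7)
The plan is to reduce the statement to Theorem~\ref{cor.ropes}: I use the cover $\pi: X\to Y$ of degree $n$ with trace-zero module $\mathcal E$ supplied by the setup~\ref{setup} and the composition $\varphi = i\circ\pi$, and then I verify the two hypotheses of that theorem.

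For Condition (1), I would argue essentially as in the proof of Theorem~\ref{curves}. Since $Y$ is a curve and $\pi$ is finite, $X$ is itself a smooth projective curve, so $X$ has an algebraic formally semiuniversal deformation and, by Remark~\ref{cor.morphisms.2}, so does $\varphi$. To get unobstructedness of $\varphi$ I would compute $h^1(\mathcal N_\varphi)$ from~\eqref{normal.sheaf}: because $X$ is a curve we have $h^2(\mathcal T_X)=0$, so it suffices to show $h^1(\varphi^*\mathcal T_{\mathbf P^N})=0$. The pulled-back Euler sequence reduces this to $h^1(\pi^*\mathcal O_Y(1))=0$, which follows from the projection formula
$$
\pi_*\pi^*\mathcal O_Y(1)\;=\;\mathcal O_Y(1)\,\oplus\,(\mathcal E\otimes\mathcal O_Y(1))
$$
together with the hypotheses~\eqref{curves.2.equation}.

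For Condition (2), I would show that $\nu_2$ lies in the image of $\Psi_2$ by proving $\Psi_2$ itself is surjective. From the diagram~\cite[(3.3.2)]{Gon} that defines $\Psi_2$, its cokernel is controlled by $H^1(\mathcal N_\pi)$, so it is enough to check $H^1(\mathcal N_\pi)=0$. Here is where the curve case makes everything transparent: $\pi$ is a finite morphism between smooth curves, generically \'etale, so the normal sheaf $\mathcal N_\pi$ is a torsion sheaf on $X$ supported on the ramification divisor, and a torsion sheaf on a curve has no $H^1$. Hence $\Psi_2$ is surjective for free, and in particular $\nu_2 \in \mathrm{Im}(\Psi_2)$.

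With both conditions checked, Theorem~\ref{cor.ropes} will deliver the smoothing of $\tilde i(\widetilde Y)$ inside $\mathbf P^N$. I do not expect a real obstacle: Condition (1) is a straightforward cohomological computation from the hypotheses and the projection formula, while Condition (2) is essentially automatic once one notices that normal sheaves of finite morphisms of smooth curves are torsion. This is precisely why the present argument should end up much cleaner than the ad hoc proof of~\cite[Theorem 2.4]{GGP}.
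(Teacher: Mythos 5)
Your proposal is correct and follows essentially the same route as the paper: reduce to Theorem~\ref{cor.ropes}, verify Condition (1) by the cohomological argument of Theorem~\ref{curves} (projection formula plus the hypotheses~\eqref{curves.2.equation} give $h^1(\pi^*\mathcal O_Y(1))=0$, hence $h^1(\mathcal N_\varphi)=0$), and verify Condition (2) from $h^1(\mathcal N_\pi)=0$ via~\cite[(3.3.2)]{Gon}. Your explicit observation that $\mathcal N_\pi$ is a torsion sheaf supported on the ramification divisor simply spells out why the paper's assertion ``since $Y$ is a curve, $h^1(\mathcal N_\pi)=0$'' holds.
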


\begin{proof}
Since $Y$ is a curve and because of~\eqref{curves.2.equation}, arguing as in the proof of Theorem~\ref{curves} we see that Condition (1) of the statement of Theorem~\ref{cor.ropes} is satisfied. Also, since $Y$ is a curve,  $h^1(\mathcal N_\pi)=0$, so $\Psi_2$ is surjective (see~\cite[(3.3.2)]{Gon}) and Condition (2) of Theorem~\ref{cor.ropes} also holds.
\end{proof}

\noindent We end this section revisiting the smoothing results of $K3$ carpets on rational normal scrolls and on Enriques surfaces, \cite[Theorem 3.5]{GP} and~\cite[Theorem 3.5]{GGP2}. We see how these theorems fit nicely in the general framework brought by the results of Section~\ref{intrinsic.section}. To do so, we reprove a key point in the proof of~\cite[Theorem 3.5]{GP} using Proposition~\ref{intrinsic} and we see how the proof of~\cite[Theorem 3.5]{GGP2} fits also in the setting of Proposition~\ref{intrinsic}. Before all this we recall the definition of $K3$ carpet:

\begin{definition}\label{defK3carpet} Let $Y$ be a smooth surface. We will say that a multiplicity $2$ rope $\widetilde Y$ on $Y$ is a $K3$ carpet if
\begin{enumerate}
\item  the dualizing sheaf $\omega_{\widetilde Y}$ is trivial and
\item $h^1(\mathcal O_{\widetilde Y})=0$.
\end{enumerate}
\end{definition}

\begin{theorem}\label{K3carpetsF0F1F2}
Let $Y$ be a Hirzebruch surface $\mathbf F_e$, for $e=0,1$ or $2$. Let $i: Y \hookrightarrow \PP^g$ be an embedding of $Y$ as a rational normal scroll in $\PP^g$, $g \geq 3$, and let $\WY \subset \PP^g$ be (the only; see~\cite[Theorem 1.3]{GP}) $K3$ carpet on $i(Y)$. Then there exist a smooth analytic curve $T$, a point $0 \in T$ and a family of subschemes $\SY \subset \PP_T^g$, flat over $T$, such that
\begin{enumerate}
\item
if $t \neq 0$, then $\SY_t$ is a smooth irreducible projective $K3$ surface in $\PP^g$, and
\item
if $t=0$, then $\SY_0$ is $\WY$.
\end{enumerate}
\end{theorem}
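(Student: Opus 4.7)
The plan is to apply Proposition~\ref{intrinsic} directly, in the same spirit as Proposition~\ref{canonical.carpets.smoothing} but with \emph{canonical} replaced by \emph{$K3$}. The starting point is that the unique $K3$ carpet $\widetilde Y$ on $i(Y)$ has conormal bundle $\mathcal E=\omega_Y$; this is the $K3$ analogue of Proposition~\ref{canonical.carpet.char} and follows from $\omega_{\widetilde Y}\simeq\mathcal O_{\widetilde Y}$ together with $p_g(Y)=q(Y)=0$ on $\mathbf F_e$. Since $|-2K_Y|$ is base--point--free on $\mathbf F_e$ for $e=0,1,2$, I pick a smooth divisor $B\in|-2K_Y|$ by Bertini and build the associated double cover $\pi\colon X\longrightarrow Y$ branched along $B$, with trace--zero module $\mathcal E=\omega_Y$. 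A direct calculation gives $\omega_X\simeq\pi^*(\omega_Y\otimes\mathcal E^{-1})\simeq\mathcal O_X$ and $h^1(\mathcal O_X)=0$, so $X$ is a smooth $K3$ surface; the line bundle $L=\pi^*\mathcal O_Y(1)$ satisfies $L^2=2(g-1)$ and $h^0(L)=g+1$, so $(X,L)$ is a hyperelliptic polarized $K3$ of genus $g$ whose morphism to $\mathbf P^g$ factors as $\varphi=i\circ\pi$.

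The next step is to deform $(X,L)$ inside the moduli of polarized $K3$ surfaces. The appropriate moduli space is smooth at $[X,L]$ and the hyperelliptic locus (pairs whose polarization gives a $2:1$ map onto a scroll) is a proper closed subset. I therefore choose a smooth irreducible algebraic curve $T$ through $[X,L]$ whose general point corresponds to a non--hyperelliptic polarized $K3$. This yields a flat family $\mathcal X\longrightarrow T$ of smooth $K3$ surfaces with a relatively very ample line bundle $\mathcal L$ extending $L$; trivializing the pushforward of $\mathcal L$ near $0$ produces a flat family of morphisms $\Phi\colon\mathcal X\longrightarrow\mathbf P^g_T$ satisfying (1)--(3) of Proposition~\ref{intrinsic} with $\Phi_0=\varphi$. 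Letting $\nu\in H^0(\mathcal N_\varphi)$ be the class of the first--order deformation $\Phi_\Delta$, it would then suffice to show that $\Psi_2(\nu)$ is a surjective homomorphism: Proposition~\ref{intrinsic} immediately gives that $\Phi_t$ is a closed immersion for $t\neq 0$, while \eqref{central.fiber} identifies the central fiber of the image family $\mathcal Y=\Phi(\mathcal X)$ with $\widetilde Y$, yielding the desired smoothing.

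The main obstacle, exactly as in Proposition~\ref{canonical.carpets.smoothing}, is this verification of surjectivity of $\Psi_2(\nu)$, which I would handle in two stages. First, $\Psi_2$ itself is surjective: by the argument of~\cite[(3.3.2)]{Gon} this reduces to $h^1(\mathcal N_\pi)=0$, which by~\cite[(2.11)]{canonical} reduces further to $h^1(\mathcal O_B(B))=0$, and a direct computation on $\mathbf F_e$ gives $\deg\mathcal O_B(B)=(-2K_Y)^2=4K_Y^2=32$ while $p_a(B)=1+K_Y^2=9$, so the degree exceeds $2p_a(B)-2=16$ and the vanishing is immediate. Second, $\Psi_2(\nu)$ must be nonzero, which is the delicate geometric point: first--order deformations of $\varphi$ that preserve the factorization through $i(Y)$ lie in $\ker\Psi_2$, and their image in the tangent space to the polarized moduli at $[X,L]$ is contained in the tangent space to the hyperelliptic locus. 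Choosing $T$ with tangent direction escaping that subspace forces $\Psi_2(\nu)\neq 0$; the rope on $Y$ classified by $\Psi_2(\nu)$ then has conormal bundle $\omega_Y$ and extends $i$, so by the uniqueness statement~\cite[Theorem 1.3]{GP} it must coincide with $\widetilde Y$. In particular $\Psi_2(\nu)$ is the surjection classifying the embedding $\tilde i$, and the hypotheses of Proposition~\ref{intrinsic} are satisfied.
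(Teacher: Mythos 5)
Your construction of the double cover, the computation $h^1(\mathcal N_\pi)=h^1(\mathcal O_B(B))=0$, and the plan to feed a family into Proposition~\ref{intrinsic} and conclude via~\eqref{central.fiber} all match the paper. The gap is in the crucial last step, where you build the curve $T$ in the polarized $K3$ moduli \emph{first} (generically, escaping the hyperelliptic locus) and then try to verify a posteriori that $\Psi_2(\nu)$ is surjective. That verification does not go through as written. First, to deduce ``$T$ escapes the tangent space of the hyperelliptic locus $\Rightarrow \Psi_2(\nu)\neq 0$'' you need the inclusion of $\ker\Psi_2$ into the preimage of that tangent space (i.e.\ ``$\Psi_2(\nu)=0$ implies the polarized surface stays hyperelliptic to first order''); what you assert is the opposite-flavored inclusion (factorization-preserving deformations lie in $\ker\Psi_2$), and the needed statement --- an analogue for $K3$'s of results like \cite[Theorem 2.6]{canonical} --- is nowhere proved. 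Second, and more seriously, $\Psi_2(\nu)\neq 0$ does not imply $\Psi_2(\nu)$ is surjective: a nonzero element of $\mathrm{Hom}(\mathcal I/\mathcal I^2,\omega_Y)$ corresponds to a pair (rope, morphism extending $i$) in which the morphism need not be an embedding, and the rope can even be the split one; the uniqueness of the $K3$ carpet in \cite[Theorem 1.3]{GP} is a statement about abstract carpets (essentially about $\mathrm{Ext}^1(\Omega_Y,\omega_Y)$), not about $\mathrm{Hom}(\mathcal I/\mathcal I^2,\omega_Y)$, so it does not identify $\Psi_2(\nu)$ with the surjection $\mu$ classifying $(\WY,\tilde i)$. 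To argue along your lines you would additionally need, e.g., the injectivity of $\mathrm{Hom}(\mathcal I/\mathcal I^2,\omega_Y)\to \mathrm{Ext}^1(\Omega_Y,\omega_Y)$ (i.e.\ $H^0(\mathcal T_{\mathbf P^g}|_Y\otimes\omega_Y)=0$) together with one-dimensionality of the target --- none of which is in your argument. There is also the subsidiary issue that over the coarse (algebraic) moduli there is no universal family, so your algebraic $T$ carrying $(\mathcal X,\mathcal L)$ needs justification; this is precisely why the theorem is stated for an analytic curve.

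The paper avoids all of this by reversing the order of choices, and you already have the key ingredient to do the same: since $H^1(\mathcal N_\pi)=0$, the map $\Psi_2$ is \emph{surjective} onto $\mathrm{Hom}(\mathcal I/\mathcal I^2,\mathcal E)$ (\cite[Lemma 3.3]{Gon}), so one lifts the specific surjective homomorphism $\mu$ corresponding to the embedded carpet $(\WY,\tilde i)$ to some $\nu\in H^0(\mathcal N_\varphi)$, takes $\widetilde\varphi$ the associated first-order deformation, and only then integrates: $(\widetilde X,\widetilde\varphi^*\mathcal O(1))$ is a tangent vector to the (smooth) moduli of \emph{marked} polarized $K3$ surfaces, an analytic arc $T$ tangent to it carries a family $(\mathcal X,\mathcal L)$ extending it, and the induced $\Phi$ satisfies the hypotheses of Proposition~\ref{intrinsic} with $\Psi_2(\nu)=\mu$ surjective by construction (the proof of Proposition~\ref{intrinsic} works verbatim for analytic $T$). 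If you restructure your argument this way, your cohomological computations suffice and the problematic ``nonzero implies surjective'' step disappears.
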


\begin{proof}
{\it Step 1.} We want to use Proposition~\ref{intrinsic}. For that we need to construct a morphism $\varphi$ and we need 
to produce a suitable first order infinitesimal deformation of $\varphi$. Recall (see~\cite[Proposition 1.5]{GGP2}) that 
the conormal bundle of $\WY$ is $\omega_Y$. Since $Y$ is a Hirzebruch surface $\mathbf F_e$, 
$\omega_Y=\mathcal O_Y(-2C_0-(e+2)f)$, where $C_0$ is a minimal section of $\mathbf F_e$ and $f$ is a fiber. 
Since $e=0,1$ or $2$, $\omega_Y^{-2}$ is base--point--free, so we can choose a smooth divisor $B \in |\omega_Y^{-2}|$ 
and there exists a smooth $K3$ double cover $\pi: X \longrightarrow Y$ branched along $B$ and with trace zero module $\omega_Y$. Let $L=\pi^*\mathcal O_Y(1)$ and let $\varphi$ be the morphism induced on $X$ by $|L|$. Then $\varphi = i \circ \pi$ and $\mathcal E=\omega_Y$, in the notation of~\ref{setup}. Let $\tilde i$ be the embedding of $\WY$ in $\PP^g$ and let $\mu$ be the element in Hom$(\mathcal I/\mathcal I^2, \mathcal E)$ that corresponds to $(\WY,\tilde i)$.
Recall that $H^1(\mathcal N_\pi)=H^1(\mathcal O_B(B))$ (see~\cite[(2.11)]{canonical}). Since $e=0,1$ or $2$, $H^1(\omega_Y^{-2})=0$, so $H^1(\mathcal N_\pi)=0$. Then~\cite[Lemma 3.3]{Gon} implies that $\Psi_2$ surjects onto Hom$(\mathcal I/\mathcal I^2, \mathcal E)$. Thus there exists $\nu \in H^0(\mathcal N_\varphi)$ such that $\Psi_2(\nu)=\mu$.
We consider $\widetilde \varphi: \widetilde X \to \mathbf P^g_\Delta$ to be the first order infinitesimal deformation of $\varphi$ that corresponds to $\nu$.

\smallskip

\noindent {\it Step 2.} We want to apply Proposition~\ref{intrinsic} to $\varphi$. For that we need to construct a suitable family $\Phi$ of morphisms. We fix a marking for $(X,L)$ and consider the moduli $\mathcal M_g$ of  marked polarized  $K3$ surfaces of genus $g$ (for details on this moduli space see e.g.~\cite{SP}). Let $\widetilde L=\widetilde \varphi^*\mathcal O_{\mathbf P^N_\Delta}(1)$. Then $(\widetilde X, \widetilde L)$ corresponds to a tangent vector $v$ to $\mathcal M_g$ at $[(X,L)]$. Taking a path $T$ through $[(X,L)]$ and tangent to $v$ we can construct a family $(\mathcal X,\mathcal L)$ extending $(\widetilde X, \widetilde L)$, so that $\mathcal L$ induces a morphism $\Phi: \mathcal X \to \mathbf P^g_T$ that extends $\widetilde \varphi$ and satisfies (1), (2) and (3) of Proposition~\ref{intrinsic}. Note that, since the moduli of marked polarized $K3$ surface is analytic, the path $T$ above is a smooth analytic curve rather than an algebraic curve, but the arguments of the proof of Proposition~\ref{intrinsic} work the same if $T$ is an analytic curve. Since $\Psi_2(\nu)=\mu$ is a surjective homomorphism (see~\cite[Proposition 2.1 (2)]{Gon}) of Hom$(\mathcal I/\mathcal I^2, \mathcal E)$, Proposition~\ref{intrinsic} implies that $\Phi_t$ is an embedding if $t \neq 0$. Let $\mathcal Y=\Phi(\mathcal X)$. Then~\eqref{central.fiber} says that $\mathcal Y_0=\widetilde Y$, so $\mathcal Y$ is the family we were looking for.
\end{proof}

\begin{corollary}(\cite[Theorem 3.5]{GP})
 Let $Y$ be a rational normal scroll in $\PP^g$, $g \geq 3$ and let $\WY \subset \PP^g$ be (the only; see~\cite[Theorem 1.3]{GP}) $K3$ carpet on $Y$. Then there exist a smooth analytic curve $T$, a point $0 \in T$ and a family of subschemes $\SY \subset \PP_T^g$, flat over $T$, such that
\begin{enumerate}
\item
if $t \neq 0$, then $\SY_t$ is a smooth irreducible projective $K3$ surface in $\PP^g$, and
\item
if $t=0$, then $\SY_0$ is $\WY$.
\end{enumerate}
\end{corollary}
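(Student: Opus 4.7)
The plan is to reduce the general case to the one settled by Theorem~\ref{K3carpetsF0F1F2} via a degeneration argument in the Hilbert scheme of $\mathbf{P}^g$. A smooth two-dimensional rational normal scroll in $\mathbf{P}^g$ has the form $S(a,b)\cong\mathbf{F}_{b-a}$ with $1\le a\le b$ and $a+b=g-1$, so the parity of $e=b-a$ is fixed by $g$; the balanced scroll ($\mathbf{F}_0$ if $g$ is odd, $\mathbf{F}_1$ if $g$ is even) is already covered by Theorem~\ref{K3carpetsF0F1F2}, and so is $\mathbf{F}_2$. Hence it suffices to handle $Y\cong\mathbf{F}_e\subset\mathbf{P}^g$ with $e\ge 3$.

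First, I would construct a flat family $\mathcal{Y}'\subset\mathbf{P}^g_T$ over a smooth pointed curve $(T,0)$ with $\mathcal{Y}'_0=Y$ and $\mathcal{Y}'_t\cong\mathbf{F}_{e_0}$ for $t\ne 0$, where $e_0\in\{0,1\}$ has the same parity as $e$. This is a classical specialization: the ideal of $S(a,b)$ is generated by the $2\times 2$ minors of a catalecticant matrix, and moving one entry of this matrix with a parameter $t$ realizes the elementary degeneration $S(a+1,b-1)\rightsquigarrow S(a,b)$, iterated to reach $Y$ from the balanced scroll.

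Next, I would globalize to a flat family of $K3$ carpets $\widetilde{\mathcal{Y}}'\subset\mathbf{P}^g_T$ with $\widetilde{\mathcal{Y}}'_0=\widetilde{Y}$ and $\widetilde{\mathcal{Y}}'_t$ the unique $K3$ carpet on $\mathcal{Y}'_t$. By Proposition~\ref{canonical.carpet.char} each fiberwise carpet has conormal bundle $\omega_{\mathcal{Y}'_t}$, so the relative construction is driven by the relative dualizing sheaf $\omega_{\mathcal{Y}'/T}$; fiberwise uniqueness (\cite[Theorem 1.3]{GP}) ensures the resulting family of ropes inside $\mathbf{P}^g_T$ restricts correctly on each fiber. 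By Theorem~\ref{K3carpetsF0F1F2}, for every $t\ne 0$ the carpet $\widetilde{\mathcal{Y}}'_t$ is smoothable in $\mathbf{P}^g$, so its class in the Hilbert scheme $H$ of $\mathbf{P}^g$ (with the Hilbert polynomial of a $K3$ surface of genus $g$) lies in the closure $Z$ of the smooth $K3$ locus. Since $Z$ is closed in $H$ and the morphism $T\to H$ induced by $\widetilde{\mathcal{Y}}'$ is continuous, the limit $[\widetilde{Y}]=[\widetilde{\mathcal{Y}}'_0]$ lies in $Z$ as well. Applying curve selection on an irreducible component of $Z$ through $[\widetilde{Y}]$ and normalizing yields the desired smooth analytic curve $T$ and flat family $\mathcal{Y}\subset\mathbf{P}^g_T$ with $\mathcal{Y}_0=\widetilde{Y}$ and $\mathcal{Y}_t$ a smooth irreducible projective $K3$ surface in $\mathbf{P}^g$ for $t\ne 0$.

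The main obstacle is the globalization step: verifying that the pointwise $K3$ carpet data assembles into a genuinely flat family $\widetilde{\mathcal{Y}}'\to T$ with the prescribed central fiber. This amounts to checking compatibility of the relative conormal sheaf and the relative extension class across the family of scrolls, and it is the only delicate point beyond Theorem~\ref{K3carpetsF0F1F2} itself.
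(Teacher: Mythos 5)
Your overall route coincides with the paper's: for $Y\cong\mathbf{F}_0,\mathbf{F}_1,\mathbf{F}_2$ the statement is Theorem~\ref{K3carpetsF0F1F2}, and for $e\ge 3$ the paper deduces the corollary from Theorem~\ref{K3carpetsF0F1F2} together with \cite[Theorem 3.6]{GP}. That cited theorem is precisely the content of your degeneration step: it is what guarantees that the $K3$ carpet on an unbalanced scroll is a flat limit of $K3$ carpets on (more) balanced scrolls, i.e.\ that $[\widetilde Y]$ lies in the closure of the locus you call $Z$. So in outline you and the paper agree; the difference is that the paper uses this as a black box, while you attempt to re-derive it.

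The re-derivation, however, has a genuine gap exactly at the point you flag, and it is not a routine compatibility check. Fiberwise uniqueness of the $K3$ carpet does not by itself produce a flat family $\widetilde{\mathcal{Y}}'\subset\mathbf{P}^g_T$ restricting to those carpets, nor does it identify the flat limit of the carpets on the fibers $t\neq 0$ with $\widetilde Y$: uniqueness can only be invoked after you know the limit is again a rope on $\mathcal{Y}'_0$ with conormal bundle $\omega_{\mathcal{Y}'_0}$, whereas a priori the limit is merely a subscheme with the right Hilbert polynomial supported on $\mathcal{Y}'_0$, which could carry embedded components or be a different double structure. To close this you must either construct a relative surjection $\mathcal{I}_{\mathcal{Y}'}/\mathcal{I}_{\mathcal{Y}'}^2\twoheadrightarrow\omega_{\mathcal{Y}'/T}$ (extending the section from the central fiber needs a semicontinuity/base-change or vanishing argument) and verify that the associated relative rope is $T$-flat with the prescribed fibers, or else compute the limit directly from explicit equations of the carpets; this is the substance of \cite[Theorem 3.6]{GP}, not a corollary of what you have set up. Two smaller points: the conormal bundle of a $K3$ carpet is $\omega_Y$, by \cite[Proposition 1.5]{GGP2} as quoted in Step 1 of the proof of Theorem~\ref{K3carpetsF0F1F2}, not by Proposition~\ref{canonical.carpet.char}, which concerns canonical carpets and gives $\omega_Y(-1)$; and your passage from the analytic smoothings of Theorem~\ref{K3carpetsF0F1F2} to membership of $[\widetilde{\mathcal{Y}}'_t]$ in the Zariski closure of the smooth $K3$ locus, followed by curve selection, is fine.
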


\begin{proof}
If $Y=\mathbf F_0, \mathbf F_1$ or $\mathbf F_2$, the corollary is Theorem~\ref{K3carpetsF0F1F2}. If $Y=\mathbf F_e$ with $e \geq 3$, then the corollary follows from Theorem~\ref{K3carpetsF0F1F2} and~\cite[Theorem 3.6]{GP}.
\end{proof}

\begin{theorem}
Let $Y$ be an Enriques surface embedded in $\PP^N$ by a morphism $i$ and let $\WY \subset \PP^N$ be a projective $K3$ carpet on $i(Y)$. Then $\tilde i(\WY)$ can be smoothed inside $\mathbf P^N$. More precisely, there exist a smooth algebraic curve $T$, a point $0 \in T$ and a family of subschemes $\SY \subset \PP_T^N$, flat over $T$, such that
\begin{enumerate}
\item if $t \neq 0$, then $\SY_t$ is a smooth irreducible projective $K3$ surface in $\PP^N$, and
\item if $t=0$, then $\SY_0$ is $\WY$.
\end{enumerate}
\end{theorem}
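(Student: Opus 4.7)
The plan is to apply Theorem~\ref{cor.ropes} in the spirit of Proposition~\ref{canonical.carpets.smoothing}, but with the canonical \'etale double cover taking the place of a branched cover. Since $Y$ is Enriques, $\omega_Y$ is a non--trivial $2$--torsion line bundle, so the canonical \'etale double cover $\pi \colon X \to Y$ exists, has trace--zero module $\mathcal E = \omega_Y$, and exhibits $X$ as a smooth projective $K3$ surface. By the same adjunction computation as in~\cite[Proposition 1.5]{GGP2}---which needs only $\omega_{\widetilde Y} = \mathcal O_{\widetilde Y}$ (from the definition of $K3$ carpet)---the conormal bundle of $\widetilde Y$ is $\omega_Y$, and hence equals $\mathcal E$. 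Set $\varphi = i \circ \pi$ and let $\mu \in \mathrm{Hom}(\mathcal I/\mathcal I^2, \mathcal E)$ be the element corresponding to the embedded carpet $(\widetilde Y, \tilde i)$; by~\cite[Proposition 2.1 (2)]{Gon}, $\mu$ is a surjective homomorphism since $\tilde i$ is an embedding.

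To check hypothesis (2) of Theorem~\ref{cor.ropes}, I note that $\pi$ is \'etale, so its branch divisor $B$ is empty, whence $H^1(\mathcal N_\pi) = H^1(\mathcal O_B(B)) = 0$ by~\cite[(2.11)]{canonical}. The commutative diagram~\cite[(3.3.2)]{Gon} then forces $\Psi_2$ to surject onto $\mathrm{Hom}(\mathcal I/\mathcal I^2, \mathcal E)$, so in particular $\mu$ lies in the image of $\Psi_2$.

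It remains to verify hypothesis (1) of Theorem~\ref{cor.ropes}: that $\varphi$ is unobstructed and admits an algebraic formally semiuniversal deformation. The idea is to exploit that Enriques surfaces have an algebraic Kuranishi family, and that the canonical \'etale double cover is functorial in the pair $(Y, \omega_Y)$; thus algebraic deformations of $(Y, i)$ in $\mathbf P^N$ lift canonically through $\pi$ to algebraic deformations of $\varphi$, which via Remark~\ref{cor.morphisms.2} supplies $\varphi$ with an algebraic formally semiuniversal deformation. Unobstructedness reduces, via~\eqref{normal.sheaf} and the Euler sequence on $\mathbf P^N$ pulled back to $X$, to the vanishings $h^2(\ST_X) = 0$ (immediate for $K3$) together with $h^1(\pi^{\ast}\mathcal O_Y(1)) = h^1(\mathcal O_Y(1)) + h^1(\mathcal O_Y(1) \otimes \omega_Y) = 0$, both of which follow from standard Enriques vanishings applied to the polarization $\mathcal O_Y(1)$. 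With both hypotheses in place, Theorem~\ref{cor.ropes} delivers the desired algebraic smoothing family $\mathcal Y = \Phi(\mathcal X) \subset \mathbf P^N_T$, with $\mathcal Y_0 = \tilde i(\widetilde Y)$ by~\eqref{central.fiber} and $\mathcal Y_t$ a smooth, irreducible, projective $K3$ surface for $t \neq 0$ (as the isomorphic image of the $K3$ fiber $\mathcal X_t$). The main obstacle lies precisely in hypothesis (1): establishing \emph{algebraic}, as opposed to merely analytic, formal semiuniversality for $\varphi$ forces one to route the deformation through the algebraic Enriques moduli rather than through the analytic $K3$ moduli used in Theorem~\ref{K3carpetsF0F1F2}, and the cohomological vanishings underlying unobstructedness must be tracked carefully; once these are settled, the conclusion follows mechanically from the machinery of Section~\ref{intrinsic.section}.
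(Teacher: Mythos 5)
Your Step 1 is exactly right and coincides with the paper's: the canonical \'etale double cover $\pi\colon X\to Y$ is a $K3$ surface with trace--zero module $\omega_Y$, the conormal bundle of $\WY$ is $\omega_Y=\SE$, the element $\mu\in\mathrm{Hom}(\SI/\SI^2,\SE)$ attached to $(\WY,\tilde i)$ is surjective, and $H^1(\SN_\pi)=0$ because $\pi$ is \'etale, so $\mu$ lifts to some $\nu\in H^0(\SN_\varphi)$. The gap is in your verification of hypothesis (1) of Theorem~\ref{cor.ropes}, and it is twofold. First, Remark~\ref{cor.morphisms.2} produces an algebraic formally semiuniversal deformation of $\varphi$ from one of $X$ itself; but $X$ is a $K3$ surface, and its formal semiuniversal deformation ($20$--dimensional) is not effective/algebraizable -- this is precisely why Theorem~\ref{K3carpetsF0F1F2} has to work with the analytic moduli of marked polarized $K3$'s and only obtains an analytic curve $T$. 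Your substitute -- lifting algebraic deformations of $(Y,i)$ through the functorial \'etale cover -- only yields deformations of $\varphi$ that preserve the $2:1$ structure over a deformed Enriques surface; such a family cannot be formally semiuniversal for $\varphi$, since it misses exactly the direction you need, namely $\nu$ with $\Psi_2(\nu)=\mu$ surjective, whose associated first order deformation has the embedded carpet, not a doubled Enriques surface, as ``image''. Second, your unobstructedness reduction is modeled on the curve case and breaks for a surface with $h^2(\SO_X)\neq 0$: pulling back the Euler sequence along $\varphi$ gives $H^1(L^{\oplus(N+1)})\to H^1(\varphi^*\ST_{\PP^N})\to H^2(\SO_X)\to H^2(L^{\oplus(N+1)})$ with $L=\pi^*\SO_Y(1)$, and since $h^2(\SO_X)=1$ and $h^2(L)=0$ one gets $h^1(\varphi^*\ST_{\PP^N})\geq 1$ even when $h^1(\pi^*\SO_Y(1))=0$; so $h^2(\ST_X)=0$ plus your stated vanishing does not give $h^1(\SN_\varphi)=0$. (One can still get $h^1(\SN_\varphi)=0$, but only by the extra argument that $H^1(\ST_X)\to H^1(\varphi^*\ST_{\PP^N})$ is surjective, i.e.\ that cup product with $c_1(L)$ maps $H^1(\ST_X)$ onto $H^2(\SO_X)$; you do not supply this.)

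The paper avoids both issues by not invoking Theorem~\ref{cor.ropes} at all: after the same Step 1, it constructs a flat family of morphisms $\Phi\colon\mathcal X\to\PP^N_T$ over an \emph{algebraic} curve $T$ extending $\widetilde\varphi$ by the Hilbert--scheme arguments of the proof of \cite[Theorem 3.2]{GGP2}, and then applies Proposition~\ref{intrinsic} directly: the surjectivity of $\Psi_2(\nu)=\mu$ gives that $\Phi_t$ is an embedding for $t\neq 0$, and \eqref{central.fiber} identifies $\mathcal Y_0=\Phi(\mathcal X)_0$ with $\tilde i(\WY)$. If you want to keep your route through Theorem~\ref{cor.ropes}, you would have to prove algebraizability of the semiuniversal deformation of $\varphi$ itself (for instance via effectivity using the tautological extension of $\varphi^*\SO_{\PP^N}(1)$, which is ample on the central fibre, and Artin's theorem), rather than of $X$, and repair the unobstructedness computation as indicated; as written, neither point is established.
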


\begin{proof} We construct $\varphi$ and $\widetilde \varphi$ as in Step 1 of the proof of Theorem~\ref{K3carpetsF0F1F2}. The only differences are that now $\varphi$ is the morphism induced by the pullback of $H^0(\mathcal O_Y(1))$ by $\pi$ and that, since $Y$ is an Enriques surface, $\pi$ is \'etale, so $H^1(\mathcal N_\pi)$ also vanishes as in Theorem~\ref{K3carpetsF0F1F2}. Then after performing the same arguments of Theorem~\ref{K3carpetsF0F1F2}, Step 1, we have that $\Psi_2(\nu)=\mu$, where $\nu \in H^0(\mathcal N_\varphi)$ corresponds to $\widetilde \varphi$, $\mu \in $ Hom$(\mathcal I/\mathcal I^2,\mathcal E)$ corresponds to $\WY \subset \PP^N$ and $\mu$ is surjective. Now we would like to construct a suitable flat family $\Phi: \mathcal X \longrightarrow \mathbf P^N_T$ of morphisms over an algebraic curve $T$, extending $\widetilde \varphi$ and satisfying (1), (2) and (3) of Proposition~\ref{intrinsic}. For this we use the same Hilbert scheme arguments of the proof of~\cite[Theorem 3.2]{GGP2}. Since $\Psi_2(\nu)=\mu$ is a surjective homomorphism, Proposition~\ref{intrinsic} implies that $\Phi_t$ is an embedding if $t \neq 0$ (note that this fact was shown in the proof of~\cite[Theorem 3.2]{GGP2} using an ad--hoc argument). We set $\mathcal Y=\Phi(\mathcal X)$. Then~\eqref{central.fiber} says that $\mathcal Y_0=\widetilde Y$, so $\mathcal Y$ is the family we were looking for.
\end{proof}

\section{A second application: constructing surfaces of general type}\label{canonical.surfaces.section}

In this section we illustrate how Theorem~\ref{cor.morphisms} can be used to produce smooth varieties of given invariants embedded in projective space. Precisely, we construct canonically embedded surfaces with $c_1^2=3p_g-7$ by deforming canonical double covers of certain non minimal rational surfaces. This construction was previously done by Ashikaga and Konno (see~\cite[4.5]{AK}) by ad--hoc methods. Here we revisit it and reveal it as a particular case of the general construction described by Theorem~\ref{cor.morphisms}. In addition to the notation~\ref{setup}, we will use the following:

\begin{noname}\label{setup.AK}
{\rm {\bf Notation.} We will use this notation in this section.
\begin{enumerate}
 \item Given natural numbers $a \leq b \leq c$  we set $r=a+b+c+3$ and we consider $Z=S(a,b,c)$ to be the smooth rational normal scroll of dimension $3$ and degree $r-3$ in $\mathbf P^{r-1}$ obtained by embedding $\mathbf P= \mathbf P(\mathcal O_{\mathbf P^1}(a) \oplus \mathcal O_{\mathbf P^1}(b) \oplus \mathcal O_{\mathbf P^1}(c))$ by $|\mathcal O_\mathbf P(1)|$.
\item We denote by $H$ the hyperplane divisor of $Z$ and by $F$ the fiber of the projection of $Z$ to $\mathbf P^1$.
\item For any $k$ such that $2a+k \geq 0$ and $a +2k+r-5 >0$, we consider $L=\mathcal O_Z(2H+kF)$.
\end{enumerate}
In addition to the conventions in~\ref{setup}, $X, Y, \pi$ and $i$ will satisfy the following:

\begin{enumerate}

\item[(4)] $Y$ will be a general member of $|L|$ and $i: Y \to \mathbf P^N$ is induced by
$|\omega_Y \otimes L|$ (note that $\omega_Z \otimes L^{\otimes 2}$ is very ample by~\ref{setup.AK}, (3)); recall that by $\mathcal O_Y(1)$ we  mean $i^*\mathcal O_{\mathbf P^N}(1)$.
\item[(5)] We consider a general member $B \in |\omega_Y^{-2} \otimes \mathcal O_Y(2)|$ and we define $\pi: X \to Y$ as the double cover of $Y$ branched along $B$.
\end{enumerate}}
\end{noname}

\begin{remark}\label{howXandYare}
 {\rm It is easy to see that $X$ and $Y$ are as follows:
\begin{enumerate}
\item The line bundle $L$ on $Z$ is base--point--free and big, because of~\ref{setup.AK}, (3). Therefore $Y$ is smooth and irreducible. In addition, $Y$ is a conic bundle over $\mathbf P^1$, so in particular $Y$ is a non minimal rational surface.
\item Note that $\omega_Y(-1) \simeq L^{-1}|_Y$; since $H^0(L^{-1}|_Y)=0$, $X$ is a connected surface.
\item The line bundle $\omega_Y^{-2} \otimes \mathcal O_Y(2) \simeq L^{\otimes 2}|_Y$ is also base--point--free. Therefore $B$ is smooth  and $X$ is a smooth irreducible surface.
\item Since $p_g(Y)=0$, the canonical map of $X$ is the composition $i \circ \pi$. Therefore $X$ is a surface of general type whose canonical bundle is ample and base--point--free.
\item Since $H^1(L^{-1}|_Y)=0$ and because of~\ref{setup.AK}, (3), $X$ has these invariants:
\begin{enumerate}
\item $p_g(X)=4r+6k-15 \geq 7$,
\item $q(X)=0$, and
\item $c_1^2(X)=3p_g(X)-7$.
\end{enumerate}
\end{enumerate}}
\end{remark}

\begin{proposition}\label{construct.canonical.surfaces} Let $\varphi$ be the canonical map of $X$. Then $\varphi$
\begin{enumerate}
\item is unobstructed, and
\item can be deformed (in a family of morphisms, flat over a smooth algebraic curve) to an embedding into projective space.
\end{enumerate}
\end{proposition}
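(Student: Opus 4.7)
The strategy is to verify the two hypotheses of Theorem~\ref{cor.morphisms}, whose conclusion then immediately yields both (1) and (2). Since $\omega_X$ is ample and base--point--free by Remark~\ref{howXandYare}(4), Remark~\ref{cor.morphisms.2} produces the algebraic formally semiuniversal deformation of $\varphi$. Unobstructedness is the statement $H^1(\mathcal N_\varphi)=0$; to prove it we exploit $\varphi=i\circ\pi$. Chasing the three normal sheaf sequences for $\varphi$, $\pi$ and $\pi^*i$ yields
\begin{equation*}
0 \longrightarrow \mathcal N_\pi \longrightarrow \mathcal N_\varphi \longrightarrow \pi^*\mathcal N_i \longrightarrow 0,
\end{equation*}
so it suffices to see that $H^1(\mathcal N_\pi)$ and $H^1(\pi^*\mathcal N_i)$ both vanish. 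The former equals $H^1(\mathcal O_B(B))$ by~\cite[(2.11)]{canonical}; using $0\to\mathcal O_Y\to L^{\otimes 2}|_Y\to\mathcal O_B(B)\to 0$ together with $p_g(Y)=0$, this reduces to $H^1(L^{\otimes 2}|_Y)=0$, which by restriction from $Z$ becomes a standard Bott--type vanishing on the scroll. The latter splits via $\pi_*\mathcal O_X=\mathcal O_Y\oplus\mathcal E$ as $H^1(\mathcal N_i)\oplus H^1(\mathcal N_i\otimes L^{-1}|_Y)$; both summands are handled using the normal sequence of $Y\subset\mathbf P^N$ combined with the adjunction sequence for $Y\subset Z$.

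For hypothesis (2) we need $\nu\in H^0(\mathcal N_\varphi)$ with $\Psi_2(\nu)\in\mathrm{Hom}(\mathcal I/\mathcal I^2,\mathcal E)$ surjective; the strategy mirrors the argument in the proof of Proposition~\ref{canonical.carpets.smoothing}. First, the vanishing $H^1(\mathcal N_\pi)=0$ just established, fed into~\cite[(3.3.2)]{Gon}, shows that $\Psi_2$ surjects onto the whole of $\mathrm{Hom}(\mathcal I/\mathcal I^2,\mathcal E)$. Second, we must exhibit a surjective element of $\mathrm{Hom}(\mathcal I/\mathcal I^2,\mathcal E)$. Since $Y\subset Z\subset\mathbf P^N$, the conormal sequence for this chain,
\begin{equation*}
0 \longrightarrow \mathcal N_{Z/\mathbf P^N}^\vee|_Y \longrightarrow \mathcal I/\mathcal I^2 \longrightarrow \mathcal N_{Y/Z}^\vee \longrightarrow 0,
\end{equation*}
exhibits a natural surjection of $\mathcal I/\mathcal I^2$ onto $\mathcal N_{Y/Z}^\vee=L^{-1}|_Y=\mathcal E$. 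Moreover, $Y$ satisfies the three conditions (a), (b), (c) of Proposition~\ref{canonical.carpet.char}: it is regular with $p_g(Y)=0$ by Remark~\ref{howXandYare}(1), $i$ is induced by the complete linear series $|\omega_Y\otimes L|$ by construction, and $h^1(\mathcal O_Y(1))=0$ is again a straightforward scroll--restriction vanishing; thus the surjection above encodes an embedded canonical carpet on $i(Y)$ (consistently with the uniqueness statement announced in the introduction). Lifting it through $\Psi_2$ provides the required $\nu$.

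The main obstacle is orchestrating the several cohomology vanishings required above on $B$, $Y$, and $\mathcal N_i$. All of them reduce, via the restriction sequences $0\to\mathcal O_Z(-Y)\to\mathcal O_Z\to\mathcal O_Y\to 0$ and the analogous one for $B\subset Y$, to the cohomology of explicit line bundles on the rational normal scroll $Z$. The numerical constraints $2a+k\geq 0$ and $a+2k+r-5>0$ in~\ref{setup.AK}(3) are calibrated precisely so that the relevant Bott--type vanishings on $Z$ hold; once these are in place, Theorem~\ref{cor.morphisms} delivers both statements of the proposition.
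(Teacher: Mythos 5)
Your verification of Condition (2) of Theorem~\ref{cor.morphisms} is essentially the paper's own argument (surjectivity of $\Psi_2$ from $H^1(\mathcal N_\pi)\simeq H^1(\mathcal O_B(B))\simeq H^1(L^{\otimes 2}|_Y)=0$, plus the surjection $\mathcal I/\mathcal I^2\twoheadrightarrow \mathcal N_{Y/Z}^\vee=L^{-1}|_Y=\mathcal E$ coming from $Y\in|L|$), and that part is fine. The gap is in your treatment of Condition (1). First, unobstructedness of $\varphi$ is \emph{not} the statement $H^1(\mathcal N_\varphi)=0$; that vanishing is only a sufficient condition (the paper itself points this out parenthetically in the proof of Proposition~\ref{moduli}). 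More seriously, the vanishing you aim for is false in this situation, so at least one of your claimed vanishings for $H^1(\pi^*\mathcal N_i)$ cannot hold. Indeed, since $\varphi$ is the morphism given by the complete canonical series and $q(X)=0$, pulling back the Euler sequence gives
\begin{equation*}
0\longrightarrow \mathcal O_X\longrightarrow H^0(\omega_X)^\vee\otimes\omega_X\longrightarrow \varphi^*\mathcal T_{\mathbf P^N}\longrightarrow 0,
\end{equation*}
and the map $H^2(\mathcal O_X)\to H^0(\omega_X)^\vee\otimes H^2(\omega_X)$ is exactly the Serre duality isomorphism, so $H^2(\varphi^*\mathcal T_{\mathbf P^N})=0$. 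The long exact sequence of \eqref{normal.sheaf} then shows that $H^1(\mathcal N_\varphi)$ surjects onto $H^2(\mathcal T_X)$, whose dimension is $6k+4r-21=p_g(X)-6\geq 1$ by \eqref{dimH2TX}. Hence $H^1(\mathcal N_\varphi)\neq 0$, and since $H^1(\mathcal N_\pi)=0$, the sequence $0\to\mathcal N_\pi\to\mathcal N_\varphi\to\pi^*\mathcal N_i\to 0$ forces $H^1(\pi^*\mathcal N_i)\neq 0$ (in fact it is the summand $H^1(\mathcal N_i\otimes\omega_Y(-1))$ that survives, since $h^1(\mathcal N_{i(Y),\mathbf P^N})=0$). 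So your proposed cohomological chase cannot prove (1), and without (1) your application of Theorem~\ref{cor.morphisms}, hence also your proof of (2) of the Proposition, collapses.

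The paper circumvents precisely this obstruction-space issue by a different mechanism: it proves that $X$ itself is unobstructed, using the conic-bundle fibration $p:X\to\mathbf P^1$ — smoothness of $\mathrm{Def}_p$ via $H^2(\mathcal T_{X/\mathbf P^1})=0$ (obtained from the dimension count $h^2(\mathcal T_X)=h^2(p^*\mathcal T_{\mathbf P^1})=6k+4r-21$) and smoothness of the forgetful map $\mathrm{Def}_p\to\mathrm{Def}_X$ via $H^1(p^*\mathcal T_{\mathbf P^1})=0$ — and then transfers unobstructedness from $X$ to $\varphi$ by \cite[Corollary 2.2, (3)]{canonical}, which applies to canonical morphisms. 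If you want to salvage your write-up, you must replace the claimed vanishing $H^1(\mathcal N_\varphi)=0$ by an argument of this kind that establishes smoothness of the deformation functor without killing the obstruction space.
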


\begin{proof}
We want to apply Theorem~\ref{cor.morphisms}. We check first Condition (1). Remarks~\ref{cor.morphisms.2} and~\ref{howXandYare}, (4) say that $\varphi$ has an algebraic formally semiuniversal deformation. Now we see that $\varphi$ is unobstructed. \cite[Corollary 2.2, (3)]{canonical} says that $\varphi$ is unobstructed if $X$ is unobstructed.
To prove that $X$ is unobstructed, we consider the  projection $p: X \to \mathbb P^1$ induced by the projection from the ruled variety $Z$ onto  $\mathbb P^1$. Then we look at the forgetful morphism $\mathrm{Def}_p \to \mathrm{Def}_X$ between the functors of infinitesimal deformations of $p$ and of $X$. Indeed, the functor $\mathrm{Def}_X$ is smooth if both $\mathrm{Def}_p$ and the forgetful morphism $\mathrm{Def}_p \to \mathrm{Def}_X$ are. We see first that the forgetful morphism is smooth. For that, we check first 
\begin{equation}\label{tangentP1.pullback.vanish.cohom}
 H^1(p^{\ast} \mathcal T_{\mathbb P^1})=0.
\end{equation}
To prove~\eqref{tangentP1.pullback.vanish.cohom}, we push down $p^{\ast} \mathcal T_{\mathbb P^1}$ to $Y$; then~\eqref{tangentP1.pullback.vanish.cohom} follows from  standard cohomology computations on $Y$ and $Z$, having in account~\ref{setup.AK}, (3).
Finally, by~\cite[Proposition 3.4.11, (iii)]{Ser}, $H^1(p^{\ast} \mathcal T_{\mathbb P^1})=0$ implies that the forgetful morphism is smooth.

\smallskip
\noindent Now we see that $\mathrm{Def}_p$ is also smooth. For this, by~\cite[Lemma 3.4.7, (iv) and Theorem 3.4.8]{Ser}, it suffices to see that $H^2(\ST_{X/\mathbb P^1})=0$. This cohomology group fits in the exact sequence
\begin{equation*}
0 \longrightarrow H^2(\ST_{X/\mathbb P^1}) \longrightarrow H^2(\ST_X) \overset{\mu}\longrightarrow H^2(p^{\ast} \mathcal T_{\mathbb P^1}) \longrightarrow 0,
\end{equation*}
which is injective on the left--hand--side because of~\eqref{tangentP1.pullback.vanish.cohom}. We study now the surjective homomorphism $\mu$. We claim that 
\begin{equation}\label{H2.isom}
H^2(\ST_X) \simeq H^2(p^{\ast} \mathcal T_{\mathbb P^1}).
\end{equation}
In this case $\mu$ would be an isomorphism, so $H^2(\ST_{X/\mathbb P^1})$ would  vanish as wished. 
Thus, to complete the checking of Condition (1) of Theorem~\ref{cor.morphisms} it only remains to prove~\eqref{H2.isom}. 
For this we compute the dimensions of $H^2(\ST_X)$ and $H^2(p^{\ast} \mathcal T_{\mathbb P^1})$. 
For the dimension of $H^2(\ST_X)$ we consider the exact sequence of sheaves on $X$ 
\begin{equation*}
 0 \longrightarrow \mathcal T_X \longrightarrow \pi^*\mathcal T_Y \longrightarrow \mathcal N_\pi \longrightarrow 0. 
\end{equation*}
Clearly $H^2(\mathcal N_\pi)=0$, because $\mathcal N_\pi$ is supported on the ramification of $\pi$, which has dimension $1$.
We claim that
\begin{equation}\label{vanishing.H1.Npi}
 H^1(\mathcal N_\pi)  = 0 
\end{equation}
as well. To see this 
we recall (see~\cite[Lemma 2.5 and (2.11)]{canonical}) that
\begin{equation*}
 H^1(\mathcal N_\pi)  \simeq H^1(\mathcal O_B(B)).
\end{equation*}
Since $p_g(Y)=q(Y)=0$, then
$H^1(\mathcal N_\pi) \simeq H^1(\mathcal O_Y(B))=H^1(L^{\otimes 2}|_Y)$, which vanishes as one can easily 
see using standard cohomological computations on $Z$. Then $H^2(\ST_X)$ is isomorphic to $H^2(\pi^*\mathcal T_Y)$. 
After pushing down $\pi^*\mathcal T_Y$ to $Y$ and carrying out standard cohomological computations on $Y$ and $Z$, one sees that
\begin{equation}\label{dimH2TX}
 h^2(\ST_X)=6k+4r-21.
\end{equation}
On the other hand $H^2(p^{\ast} \mathcal T_{\mathbb P^1})$ can also be shown, again by standard cohomological arguments on $Y$ and $Z$, to be of dimension $6k+4r-21$.
This proves~\eqref{H2.isom} and finishes the checking of Condition (1) of Theorem~\ref{cor.morphisms}.

\medskip
\noindent Now we check Condition (2) of  Theorem \ref{cor.morphisms}. Since $Y \in |L|$ and $\omega_Y(-1) \simeq L^{-1}|_Y$, the normal bundle $\mathcal N_{Y,Z}\otimes \omega_Y(-1)$ is isomorphic to $\mathcal O_Y$, so $\mathcal N_{Y,\mathbf P^N} \otimes \omega_Y(-1)$ has a nowhere vanishing section. This is equivalent to saying that there exists a surjective homomorphism $\nu_2 \in$ Hom$( \SI/\SI^2, \omega_Y(-1))$. By~\eqref{vanishing.H1.Npi} $\nu_2$ lifts to an element $\nu \in H^0(\SN_{\varphi})$ such that  $\Psi_2(\nu)=\nu_2$, so Condition (2) of Theorem~\ref{cor.morphisms} is satisfied and $\varphi$ can be deformed to an embedding as a consequence of Theorem~\ref{cor.morphisms}.
\end{proof}

\noindent We end this section computing the dimension of the irreducible component $\mathcal M'$ of $[X]$ in its moduli space and the codimension in $\mathcal M'$ of the locus parameterizing surfaces whose canonical map is a degree $2$ morphism.

\begin{proposition}\label{moduli}
Let $X$ be as in Notation 3.1. Then the following occurs:
\begin{enumerate}
 \item The point $[X]$ belongs to a unique irreducible component $\mathcal M'$ of its moduli space whose general point  corresponds to a canonically embedded surface.
\item The dimension of the moduli component of $[X]$ is $5p_g(X)+18$.
\item The point $[X]$ belongs to the stratum $\mathcal S$ of $\mathcal M'$ parameterizing surfaces whose canonical map is a degree $2$ morphism; the codimension of  $\mathcal S$ inside $\mathcal M'$ is $1$.
\end{enumerate}
\end{proposition}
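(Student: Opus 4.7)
The plan is to deduce (1) and (2) from the unobstructedness of $X$ established in the proof of Proposition~\ref{construct.canonical.surfaces} combined with a Hirzebruch--Riemann--Roch computation, and to handle (3) by identifying $T_{[X]}\mathcal S$ with $\ker\Psi_2$ modulo the projective automorphism action and then invoking the uniqueness of the canonical carpet on $\varphi(X)$.

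For (1) and (2), the proof of Proposition~\ref{construct.canonical.surfaces} shows that $\mathrm{Def}_X$ is smooth at $[X]$, so $[X]$ lies on a unique irreducible component $\mathcal M'$ with $\dim\mathcal M'=h^1(\mathcal T_X)$; Proposition~\ref{construct.canonical.surfaces} (2) then guarantees that its general point parameterizes a canonically embedded surface. For the dimension, Hirzebruch--Riemann--Roch on the surface $X$ gives $\chi(\mathcal T_X)=2K_X^2-10\chi(\mathcal O_X)$. Substituting $K_X^2=3p_g-7$ and $\chi(\mathcal O_X)=p_g+1$ (from Remark~\ref{howXandYare}) yields $\chi(\mathcal T_X)=-4p_g-24$. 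Since $K_X$ is ample, $h^0(\mathcal T_X)=0$; combined with $h^2(\mathcal T_X)=6k+4r-21$ from~\eqref{dimH2TX}, this gives $h^1(\mathcal T_X)=5p_g+18$.

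For (3), note $[X]\in\mathcal S$ since $\varphi=i\circ\pi$ has degree $2$. The Euler sequence on $\mathbf P^N$, pulled back to $X$ and combined with $q(X)=0$ and Serre duality, gives $H^1(\varphi^*\mathcal T_{\mathbf P^N})=0$, so the natural map $H^0(\mathcal N_\varphi)\to H^1(\mathcal T_X)$ is surjective with kernel equal to the image of $H^0(\varphi^*\mathcal T_{\mathbf P^N})$ (infinitesimal $\mathrm{PGL}$-motions). By Proposition~\ref{morphism.miguel} together with~\cite[Proposition 2.1]{Gon}, the condition $\Psi_2(\nu)=0$ is precisely the infinitesimal condition that $(\mathrm{im}\,\widetilde\varphi)_0=Y$ rather than a nontrivial rope, i.e., that the deformation of $\varphi$ still factors as a degree-$2$ morphism onto a deforming smooth surface. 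Since $\mathrm{PGL}$-motions preserve the image and hence lie in $\ker\Psi_2$, the image of $\ker\Psi_2$ inside $H^1(\mathcal T_X)$ is exactly $T_{[X]}\mathcal S$, so
\begin{equation*}
\mathrm{codim}\bigl(T_{[X]}\mathcal S,\,H^1(\mathcal T_X)\bigr)=\dim\mathrm{Im}(\Psi_2)\le\dim\mathrm{Hom}(\mathcal I/\mathcal I^2,\omega_Y(-1)).
\end{equation*}
The surjective $\nu_2$ produced in the proof of Proposition~\ref{construct.canonical.surfaces} forces the left side to be at least $1$, while uniqueness of the canonical carpet on $\varphi(X)$ (i.e.\ $\dim\mathrm{Hom}(\mathcal I/\mathcal I^2,\omega_Y(-1))=1$) bounds it above by $1$.

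The main obstacle is the uniqueness of the canonical carpet on $\varphi(X)$: this amounts to a direct computation of $\mathrm{Hom}(\mathcal I/\mathcal I^2,\omega_Y(-1))$ using the embedding $Y\hookrightarrow Z$ and standard cohomology on the scroll, parallel to the computations in the proof of Proposition~\ref{construct.canonical.surfaces}. A secondary point to verify is that $\Psi_2(\nu)=0$ really characterizes $T_{[X]}\mathcal S$ and not merely a proper subspace, which follows from the fact that any such first-order deformation extends, by the unobstructedness of $\pi$ and of $Y$ as a member of $|L|$ in $Z$, to an integral deformation preserving the degree-$2$ factorization.
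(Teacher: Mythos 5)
Your treatment of (1) and (2) coincides with the paper's: unobstructedness of $X$ (from the proof of Proposition~\ref{construct.canonical.surfaces}) gives a unique component of dimension $h^1(\mathcal T_X)$, openness of very ampleness of the canonical bundle gives that its general point is canonically embedded, and the count $h^1(\mathcal T_X)=h^2(\mathcal T_X)-\chi(\mathcal T_X)$ with $\chi(\mathcal T_X)=2c_1^2-10p_g-10$ and $h^2(\mathcal T_X)=6k+4r-21$ from~\eqref{dimH2TX} yields $5p_g+18$; your arithmetic is correct.

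For (3) you take a genuinely different route. The paper does not reprove the tangent-space identification: it verifies the hypotheses of \cite[Theorem 2.6]{canonical} ($p_g(Y)=q(Y)=h^1(\mathcal O_Y(1))=0$, $h^0(\omega_Y(-1))=0$, unobstructedness of $Y$ via $h^1(\mathcal N_{i(Y),\mathbf P^N})=0$, and $h^1(\omega_Y^{-2}(2))=0$) and then quotes \cite[Corollary 4.5]{canonical}, which says the codimension of $\mathcal S$ in $\mathcal M'$ equals $\dim\mathrm{Ext}^1(\Omega_Y,\omega_Y(-1))$, computed to be $1$. You instead sketch the content of those cited results: that $T_{[X]}\mathcal S$ is the image in $H^1(\mathcal T_X)$ of $\ker\Psi_2$, so the codimension is $\dim\mathrm{Im}\,\Psi_2=\dim\mathrm{Hom}(\mathcal I/\mathcal I^2,\omega_Y(-1))=1$ (using surjectivity of $\Psi_2$, which indeed follows from~\eqref{vanishing.H1.Npi}). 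This is the right picture, but as written it has a genuine gap precisely at the step you call secondary. Your argument only addresses one inclusion (that elements of $\ker\Psi_2$ integrate to deformations preserving the degree-$2$ factorization, via unobstructedness of $Y$ and $\pi$); the harder inclusion --- that \emph{every} tangent vector to $\mathcal S$, i.e.\ every first-order deformation of $X$ lying in a family all of whose fibres have degree-$2$ canonical map, lifts to some $\nu\in H^0(\mathcal N_\varphi)$ with $\Psi_2(\nu)=0$ --- is exactly what \cite[Theorem 2.6]{canonical} provides and is not established by your remark; nor is the passage from the codimension of $T_{[X]}\mathcal S$ in $H^1(\mathcal T_X)$ to the codimension of the stratum $\mathcal S$ itself (this is the role of \cite[Lemma 4.4 and Corollary 4.5]{canonical}, and the paper is careful to note which hypothesis of Lemma 4.4 can be dropped). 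The auxiliary claims you use (that $H^1(\varphi^*\mathcal T_{\mathbf P^N})=0$, which needs not only $q(X)=0$ but the completeness of the canonical series to get injectivity of $H^2(\mathcal O_X)\to H^2(\omega_X)^{\oplus(N+1)}$, and that the $\mathrm{PGL}$-directions are killed by $\Psi_2$) are true but also left unproved. Finally, the computation $\dim\mathrm{Hom}(\mathcal I/\mathcal I^2,\omega_Y(-1))=1$ that you flag as the main obstacle is the same cohomological computation on $Y$ and $Z$ that the paper performs (as $\dim\mathrm{Ext}^1(\Omega_Y,\omega_Y(-1))=1$); note the paper derives the uniqueness of the canonical carpet (Corollary~\ref{canonical.carpet}) \emph{from} this count, so you should present it as a direct computation rather than as a consequence of carpet uniqueness, to avoid circularity within the paper's logical order. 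In short: (1) and (2) are fine and identical to the paper; (3) needs either the citation of \cite[Theorem 2.6 and Corollary 4.5]{canonical} with their hypotheses checked (the paper's route) or a full proof of the tangent-space identification you assert.
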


\begin{proof}
Since $X$ is unobstructed, $[X]$  only belongs to one irreducible component of the moduli space. By Proposition~\ref{construct.canonical.surfaces}, this component contains a point that corresponds to a canonically embedded surface. Since very ampleness of the canonical bundle is an open condition, the general point of this component does also correspond to a canonically embedded surface. This proves (1).

\smallskip

\noindent Since $X$ is unobstructed, the dimension of the component is $h^1(\mathcal T_X)$. Since $X$ is a surface of general type, $h^0(\mathcal T_X)=0$, so
\begin{equation}\label{dimH1TX}
 h^1(\mathcal T_X)=h^2(\mathcal T_X)-\chi(\mathcal T_X).
\end{equation}
Since $q(X)=0$, Hirzebruch--Riemann--Roch yields
\begin{equation}\label{dimEulerTX}
\chi(\mathcal T_X)=2c_1^2(X)-10p_g(X)-10.
\end{equation}
Then Remark~\ref{howXandYare} (5), \eqref{dimH2TX}, \eqref{dimH1TX} and~\eqref{dimEulerTX} prove (2).

\smallskip

\noindent Finally, to prove (3) observe that $p_g(Y)=q(Y)=h^1(\mathcal O_Y(1))=0$, that $h^0(\omega_Y(-1))=0$ (recall Remark~\ref{howXandYare}, (2)), that $Y$ is unobstructed because $h^1(\mathcal N_{i(Y),\mathbf P^N})=0$ and that $h^1(\omega_Y^{-2}(2))=0$, as seen when proving~\eqref{vanishing.H1.Npi} (recall Remark~\ref{howXandYare}, (3)). Then Conditions (1) to (5) of~\cite[Theorem 2.6]{canonical} are satisfied, so~\cite[Corollary 4.5]{canonical} can be applied in our situation (note that in order to apply~\cite[Corollary 4.5]{canonical} to compute the codimension of $\mathcal S$ inside $\mathcal M'$, the condition $H^1(\mathcal N_\varphi)=0$ required in~\cite[Lemma 4.4]{canonical} is not needed). Then according to~\cite[Corollary 4.5]{canonical}, the codimension of $\mathcal S$ in $\mathcal M'$ is the dimension of Ext$^1(\Omega_Y,\omega_Y(-1))$. This dimension is $1$ as can be easily seen by using standard cohomological arguments on $Y$ and $Z$.
\end{proof}

\medskip

\noindent The arguments of the proof of Proposition~\ref{moduli} yield the following corollary regarding canonical carpets (recall Definition~\ref{defcanonicalcarpet}):

\begin{corollary}\label{canonical.carpet}
\begin{enumerate}
 \item There exists only one canonical carpet $\widetilde Y$ 
supported on $i(Y)$.
\item The carpet $\widetilde Y$ can be smoothed inside $\mathbf P^N$. More precisely, there exist a smooth algebraic curve $T$, a point $0 \in T$ and a family of subschemes $\SY \subset \PP_T^N$, flat over $T$, such that $\SY_0=\WY$ and $\SY_t$ is a smooth, irreducible, canonically embedded surface of general type in $\PP^N$ if $t \neq 0$.
\end{enumerate}
\end{corollary}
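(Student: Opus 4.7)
The plan is to deduce both parts from ingredients already assembled in Propositions~\ref{construct.canonical.surfaces} and~\ref{moduli}. Part (2) is a direct application of Proposition~\ref{canonical.carpets.smoothing} once its hypotheses are checked, while part (1) follows from the one-dimensionality of $\mathrm{Ext}^1(\Omega_Y,\omega_Y(-1))$ computed at the end of the proof of Proposition~\ref{moduli}(3).

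For part (1), I would first observe that $Y$ satisfies the hypotheses of Proposition~\ref{canonical.carpet.char}: $Y$ is regular with $p_g(Y)=0$ by Remark~\ref{howXandYare}(5), $i$ is induced by the complete linear series $|\omega_Y\otimes L|$ by Notation~\ref{setup.AK}, and $h^1(\mathcal O_Y(1))=0$ by routine cohomological computations on the scroll $Z$ (of the same flavor as those used throughout the proof of Proposition~\ref{moduli}). Consequently, canonical carpets supported on $i(Y)$ are precisely the multiplicity-$2$ ropes on $Y$ with conormal bundle $\omega_Y(-1)$ equipped with a chosen extension of $i$ to $\mathbf P^N$. Following the Ext dictionary of~\cite{Gon} (and as used implicitly in~\cite[Corollary 4.5]{canonical} when relating the hyperelliptic stratum $\mathcal S$ to carpet structures in the proof of Proposition~\ref{moduli}(3)), the isomorphism classes of such canonical carpets form the projective space $\mathbf P(\mathrm{Ext}^1(\Omega_Y,\omega_Y(-1)))$. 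Since this Ext group is one-dimensional, the projective space degenerates to a point, yielding uniqueness of the canonical carpet on $i(Y)$.

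For part (2), I would simply invoke Proposition~\ref{canonical.carpets.smoothing} with the data of Notation~\ref{setup.AK}. Its hypotheses are all already in hand: (a) $p_g(Y)=0$ since $Y$ is a non-minimal rational surface (Remark~\ref{howXandYare}(1)); (b) $i$ is induced by the complete linear series $|\omega_Y\otimes L|$; (c) $B\in|\omega_Y^{-2}(2)|=|L^{\otimes 2}|_Y|$ is smooth by Remark~\ref{howXandYare}(3); (d) the canonical map $\varphi$ is unobstructed by Proposition~\ref{construct.canonical.surfaces}(1); and (e) $h^1(\mathcal O_B(B))=0$, which is precisely the vanishing~\eqref{vanishing.H1.Npi} established in the course of the proof of Proposition~\ref{moduli}. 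Proposition~\ref{canonical.carpets.smoothing} then produces the desired flat family $\mathcal Y\subset\mathbf P^N_T$ over a smooth curve $T$ with $\mathcal Y_0=\widetilde Y$ and generic fibre a smooth, irreducible, canonically embedded surface of general type.

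The only mildly delicate point is the identification in part (1) of isomorphism classes of canonical carpets on $i(Y)$ with $\mathbf P(\mathrm{Ext}^1(\Omega_Y,\omega_Y(-1)))$. This is not a new computation but a combination of Proposition~\ref{canonical.carpet.char} with the classical Ext-parametrization of abstract double structures, exactly the correspondence that drives the codimension count in Proposition~\ref{moduli}(3); once it is invoked, the vector-space dimension $1$ forces the projective space to be a single point, and the remainder of the corollary is bookkeeping of vanishings already proved.
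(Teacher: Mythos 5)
Your part (2) is exactly the paper's argument: it invokes Proposition~\ref{canonical.carpets.smoothing}, with (a)--(c) coming from Notation~\ref{setup.AK} and Remark~\ref{howXandYare}, (d) from Proposition~\ref{construct.canonical.surfaces}(1), and (e) from the vanishing established earlier (note only that $h^1(\mathcal O_B(B))=0$, i.e.\ \eqref{vanishing.H1.Npi}, is proved in Proposition~\ref{construct.canonical.surfaces}, and $p_g(Y)=0$ follows from $Y$ being rational, Remark~\ref{howXandYare}(1), not from item (5), which concerns $X$ --- harmless slips).

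In part (1) you follow the same route as the paper (reduce via Proposition~\ref{canonical.carpet.char} and then use one--dimensionality), but there is a genuine gap in the step ``canonical carpets on $i(Y)$ $=$ $\mathbf P(\mathrm{Ext}^1(\Omega_Y,\omega_Y(-1)))$.'' The classical $\mathrm{Ext}^1$--parametrization classifies \emph{abstract} double structures; what classifies carpets \emph{embedded in $\mathbf P^N$ extending $i$} is $\mathrm{Hom}(\mathcal I/\mathcal I^2,\omega_Y(-1))$ (\cite[Proposition 2.1(1)]{Gon}), and a class gives an embedded carpet precisely when the homomorphism is \emph{surjective} (\cite[Proposition 2.1(2)]{Gon}); two proportional homomorphisms give the same subscheme. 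The identification $\mathrm{Hom}(\mathcal I/\mathcal I^2,\omega_Y(-1))\simeq\mathrm{Ext}^1(\Omega_Y,\omega_Y(-1))$ is not automatic either; it is \cite[Lemma 3.9]{canonical}, valid here because conditions of \cite[Theorem 2.6]{canonical} were checked in the proof of Proposition~\ref{moduli}(3). With these points in place, one--dimensionality only yields that there is \emph{at most one} canonical carpet; since the corollary asserts ``there exists only one,'' you still must exhibit a surjective element of $\mathrm{Hom}(\mathcal I/\mathcal I^2,\omega_Y(-1))$. That is supplied in the proof of Proposition~\ref{construct.canonical.surfaces}: $\mathcal N_{Y,Z}\otimes\omega_Y(-1)\simeq\mathcal O_Y$ gives a nowhere vanishing section of $\mathcal N_{Y,\mathbf P^N}\otimes\omega_Y(-1)$, i.e.\ the surjective $\nu_2$ used there; your write-up never invokes it, and without it the claimed bijection with all of $\mathbf P(\mathrm{Ext}^1)$ is unjustified (a priori the unique nonzero class mod scalars might fail to be surjective and give no embedded carpet at all). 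Adding this one observation closes the gap and makes your argument coincide with the paper's.
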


\begin{proof} Notation~\ref{setup.AK}, (4) and Remark~\ref{howXandYare} (1), (2) and (5) imply Conditions (a), (b) and (c) of Proposition~\ref{canonical.carpet.char}. Then, by Proposition~\ref{canonical.carpet.char} and~\cite[Proposition 2.1]{Gon}, in order to prove (1) it suffices to check that Hom$(\mathcal I/\mathcal I^2,\omega_Y(-1))$ has only one surjective homomorphism up to multiplication by scalar. For this recall that the dimension of Ext$^1(\Omega_Y,\omega_Y(-1))$ is $1$, as seen in the last part of the previous proof, and that Hom$(\mathcal I/\mathcal I^2,\omega_Y(-1))$ and Ext$^1(\Omega_Y,\omega_Y(-1))$ are isomorphic (see~\cite[Lemma 3.9]{canonical}). Finally, in the proof of Proposition~\ref{moduli} we saw the existence  of at least one surjective homomorphism in Hom$(\mathcal I/\mathcal I^2,\omega_Y(-1))$.

\smallskip

\noindent Part (2) follows from Remark~\ref{howXandYare}, (3), from the vanishing of $h^1(\mathcal O_B(B))$, which was shown in the proof of Proposition~\ref{moduli}, and from Propositions~\ref{construct.canonical.surfaces}, (1) and~\ref{canonical.carpets.smoothing}.
\end{proof}

\begin{remark}
 {\rm The uniqueness of the canonical carpet $\WY$ on $i(Y)$ allows us to describe it more explicitly. Indeed, if we double $Y$ inside $Z$ and $Z$ is embedded in $\mathbf P^N$ by $|H+(2k+r-5)F|$, by adjunction it is easy to see that we obtain a canonical carpet on $i(Y)$. Then Corollary~\ref{canonical.carpet}, (1) implies that such canonical carpet is $\WY$. Thus, $\WY$ is a member of $|L^{\otimes 2}|$ and a smoothing of $\WY$ can be obtained explicitly deforming $\WY$ inside its complete linear series in $Z$.}
\end{remark}

\begin{remark}{\rm For surfaces $X$ as in~\ref{setup.AK}, Proposition~\ref{moduli}, (3) and Corollary~\ref{canonical.carpet} give these two geometric interpretations for the fact that Hom$(\mathcal I/\mathcal I^2, \mathcal E)$ has dimension $1$: the existence of a unique canonically embedded carpet on $i(Y)$ and the existence of a ``hyperelliptic'' stratum $\mathcal S$, of codimension $1$ in $\mathcal M'$ (we call here $\mathcal S$ hyperelliptic because the canonical map of $X$ is a degree $2$ morphism onto $Y$). This is the same situation that occurs for $K3$ carpets (recall Definition~\ref{defK3carpet}) supported on rational normal scrolls. In that case, any rational normal scroll admits only one $K3$ carpet supported on it. On the other hand, the hyperelliptic locus in the moduli of polarized $K3$ surfaces has, except when the genus is $2$, codimension $1$ (for details, see~\cite{GP}). More in general, the codimension of the ``hyperelliptic'' stratum computed in~\cite[Corollary 4.5]{canonical} can be interpreted, by the same reasons, as the dimension of the space of certain ``canonical'' double structures.}
\end{remark}

\begin{remark}\label{existence}
{\rm For any odd integer $m \geq 7$, there exist surfaces $X$ as in Notation 3.1 with $p_g(X)=m$. Indeed, it is not difficult to find integers $a,b,c,r$ and $k$ satisfying~\ref{setup.AK} (1) and (3) and such that $4r+6k-15=m$. For example,  choose $r=-2k=p_g+15$ and
\begin{enumerate}
\item[] $\frac{p_g+12}{3}=a=b=c$, if $p_g \equiv 0 \ (3)$;
\item[] $\frac{p_g+11}{3}=a=b, c=a+1$, if $p_g \equiv 1 \ (3)$; or
\item[] $\frac{p_g+10}{3}=a, b=c=a+1$, if $p_g \equiv 2 \ (3)$.
\end{enumerate}
Thus Propositions~\ref{construct.canonical.surfaces} and~\ref{moduli} imply, for any odd $p_g \geq 7$ and  $c_1^2=3p_g-7$ , the existence of at least one irreducible component $\mathcal M'$ of $\mathcal M_{p_g,0,c_1^2}$;  the general points of $\mathcal M'$ correspond to surfaces of general type which can be canonically embedded and $\mathcal M'$ contains a codimension $1$ stratum $\mathcal S$  that parameterizes surfaces whose canonical map is a degree $2$ morphism.}
\end{remark}

\noindent Finally, in the next remark we compare the dimension of $\mathcal M'$ with the dimension of the moduli components $\mathcal M, \mathcal M_2$ and $\mathcal M_3$ of the surfaces of~\cite[Theorem 3.18]{canonical} and~\cite[Theorem 1.7]{Hirzebruch} having the same invariants as the surfaces of Proposition~\ref{construct.canonical.surfaces}. We recall that the components $\mathcal M, \mathcal M_2$ and $\mathcal M_3$ parameterize surfaces of general type whose canonical map is a degree $2$ morphism.

\begin{remark}\label{twocomponents}
{\rm In view of Remark~\ref{existence}, \cite[Theorem 4.11]{canonical}, \cite[Examples 3.7 and 3.8 and Remark 3.9]{Hirzebruch} and Proposition~\ref{construct.canonical.surfaces} show the following:
\begin{enumerate}
 \item For the triplets $(p_g,q,c_1^2)=(9, 0, 20), (15, 0, 38), (23, 0,  62), (33, 0, 92)$ 
there exist at least two components of $\mathcal M_{(p_g,q,c_1^2)}$. One of them is the component, which we  called $\mathcal M'$, described in Proposition~\ref{moduli}. The  other one, which we will call $\mathcal M$, parameterizes surfaces of general type as in~\cite[Theorem 3.18]{canonical}, therefore with a canonical map which is a degree $2$ morphism.
\item For the triplet $(39,0,110)$, there exist at least two components of $\mathcal M_{(p_g,q,c_1^2)}$. One of them is the component, which we called $\mathcal M'$, described in Proposition~\ref{moduli}. The  other one, which we will call ${\mathcal M_2}$, parameterizes surfaces of general type as in~\cite[Theorem 1.7]{Hirzebruch} (see also~\cite[Example 3.7]{Hirzebruch}), therefore with a canonical map which is a degree $2$ morphism.
\item For the triplet $(45,0,128)$, there exist at least three components of $\mathcal M_{(p_g,q,c_1^2)}$. One of them is the component, which we called $\mathcal M'$, described in Proposition~\ref{moduli}. The  other ones, which we will call ${\mathcal M_2}$ and ${\mathcal M_3}$, parameterize surfaces of general type as in~\cite[Theorem 1.7]{Hirzebruch} (see also~\cite[Example 3.8]{Hirzebruch}), therefore with a canonical map which is a degree $2$ morphism.
\end{enumerate}
The dimensions $\mu, \mu_2, \mu_3$ and $\mu'$ of $\mathcal M, \mathcal M_2, \mathcal M_3$ and $\mathcal M'$ respectively, follow from~\cite[Proposition 4.6]{canonical}, \cite[Proposition 3.6]{Hirzebruch} and Proposition~\ref{moduli} and are summarized in the following table:

\medskip

 \centerline{\vbox{\tabskip=0pt \offinterlineskip
\def\tablerule{\noalign{\hrule}}
\halign to 
6.5 truecm
{\strut
#& \vrule#
\tabskip=0em plus 3em
& \hskip .25cm
\hfil #
\hfil  \hskip .05cm
& \vrule #
& \hskip .3cm
\hfil #
\hfil  \hskip .05cm
& \vrule \vrule \vrule #
&\hskip .25cm
\hfil# \hfil \hskip .05cm
&  \vrule #
& \hskip .25cm
\hfil #
\hfil  \hskip .05cm
& \vrule #
& \hskip .3cm
\hfil #
\hfil  \hskip .05cm
& \vrule #
& \hskip .3cm
\hfil #
\hfil  \hskip .05cm
& \vrule #&
\hskip .15cm \hfil# 
\hfil & \vrule#\tabskip=0pt
\cr\tablerule
&&
$p_g$ 
&& 
$c_1^2$ 
&& 
$\mu$ 
&&
$\mu_2$ 
&&
$\mu_3$ 
&&
$\mu'$ 
&\cr\tablerule
\tablerule
\tablerule
&&$9$
&&$20$
&&$63$
&&{\bf --}
&&{\bf --}
&&$63$
&\cr
\tablerule
&&$15$
&&$38$
&&$96$
&&{\bf --}
&&{\bf --}
&&$93$
&\cr
\tablerule
&&$23$ 
&&$62$
&&$141$
&&{\bf --}
&&{\bf --}
&&$133$
&\cr
\tablerule
&&$33$ 
&&$92$
&&$198$
&&{\bf --}
&&{\bf --}
&&$183$
&\cr
\tablerule
&&$39$ 
&&$110$
&&{\bf --}
&&$233$
&&{\bf --}
&&$213$
&\cr
\tablerule
&&$45$ 
&&$128$
&&{\bf --}
&&$267$
&&$266$
&&$243$
&\cr
\tablerule
\noalign{\smallskip}}}}
}
\end{remark}

\begin{acknowledgement}
{\rm We are grateful to Edoardo Sernesi for helpful conversations. We also thank Tadashi Ashikaga for bringing to our attention his result~\cite[4.5]{AK} with Kazuhiro Konno.}
\end{acknowledgement}

\end{document}